\numberwithin{equation}{section}
\newtheorem{Theorem}{Theorem}[section]
\newtheorem*{Theorem*}{Theorem}
\newtheorem{Corollary}[Theorem]{Corollary}
\newtheorem{Proposition}[Theorem]{Proposition}
 { \theoremstyle{definition}
\newtheorem{Definition}[Theorem]{Definition}

 }
\begin{document}
\allowdisplaybreaks

\newcommand{\arXivNumber}{2201.03960}

\renewcommand{\PaperNumber}{056}

\FirstPageHeading

\ShortArticleName{$q$-Middle Convolution and $q$-Painlev\'e Equation}

\ArticleName{$\boldsymbol{q}$-Middle Convolution and $\boldsymbol{q}$-Painlev\'e Equation}

\Author{Shoko SASAKI~$^{\rm a}$, Shun TAKAGI~$^{\rm a}$ and Kouichi TAKEMURA~$^{\rm b}$}

\AuthorNameForHeading{S.~Sasaki, S.~Takagi and K.~Takemura}

\Address{$^{\rm a)}$~Department of Mathematics, Faculty of Science and Engineering, Chuo University,\\
\hphantom{$^{\rm a)}$}~1-13-27 Kasuga, Bunkyo-ku, Tokyo 112-8551, Japan}

\Address{$^{\rm b)}$~Department of Mathematics, Ochanomizu University,\\
\hphantom{$^{\rm b)}$}~2-1-1 Otsuka, Bunkyo-ku, Tokyo 112-8610, Japan}
\EmailD{\href{mailto:takemura.kouichi@ocha.ac.jp}{takemura.kouichi@ocha.ac.jp}}

\ArticleDates{Received January 31, 2022, in final form July 08, 2022; Published online July 20, 2022}

\Abstract{A $q$-deformation of the middle convolution was introduced by Sakai and Yama\-guchi. We apply it to a linear $q$-difference equation associated with the $q$-Painlev\'e~VI equation. Then we obtain integral transformations. We investigate the $q$-middle convolution in terms of the affine Weyl group symmetry of the $q$-Painlev\'e VI equation. We deduce an integral transformation on the $q$-Heun equation.}

\Keywords{$q$-Painlev\'e equation; $q$-Heun equation; middle convolution; integral transfor\-ma\-tion}

\Classification{33E10; 34M55; 39A13}

\section{Introduction}

The middle convolution was introduced by Katz \cite{Katz} for local systems on a punctured Riemann sphere, and Dettweiler and Reiter \cite{DR1,DR2} reformulated it for the Fuchsian system of differential equations.
Here the Fuchsian system of differential equations is the system of linear differential equations written as
\begin{gather}
\frac{{\rm d}Y}{{\rm d}x}=\bigg(\frac{A_1}{x-t_1}+\frac{A_2}{x-t_2}+\dots + \frac{A_r}{x-t_r}\bigg) Y, \label{eq:original}
\end{gather}
where $Y$ is a column vector with $n$ entries and $A_1, A_2, \dots,A_r$ are constant matrices of size $n\times n$.
We review briefly the definition of the middle convolution for equation~(\ref{eq:original}) (or the tuple of the matrices $(A_1, \dots,A_r)$).
Let $\lambda \in \mathbb{C}$ and $F_i$, $i=1,\dots,r$, be the matrix of size $nr \times nr $ of the form
\begin{gather}
F_{i} =
\begin{pmatrix}
O & \cdots & O & \cdots & O \\
\vdots & & \vdots & & \vdots \\
A_{1} & \cdots & A_{i} + \lambda I_{n} & \cdots & A_{r} \\
\vdots & & \vdots & & \vdots \\
O & \cdots & O & \cdots & O
\end{pmatrix} {\scriptstyle (i)},
\label{eq:Fi}
\end{gather}
where $I_n $ is the identity matrix of size $n$.
Then the correspondence of the tuple of matrices $(A_1, \dots,A_r ) \mapsto (F_1, \dots,F_r )$ (or the correspondence of the associated Fuchsian system) is called the convolution.
The convolution does not preserve the irreducibility in general.
It is shown that the following subspaces $\mathcal{K}, \mathcal{L}$ of $\mathbb{C}^{nr}$ are preserved by the action of $F_i$, $i=1,\dots,r$,
\begin{gather*}
\mathcal{K} = \begin{pmatrix}
\ker A_{1} \\
\vdots \\
\ker A_{r}
\end{pmatrix}\!, \qquad
\mathcal{L} = \ker (F_{1} + F_{2} + \cdots + F_{r}).
\end{gather*}
We denote the linear transformation induced from the action of $F_{i}$ on the quotient space $\mathbb{C}^{nr}/(\mathcal{K} + \mathcal{L})$ by $\overline{F}_{i}$.
The correspondence of the tuple of matrices $(A_1, \dots,A_r ) \mapsto \big(\overline{F}_1, \dots,\overline{F}_r \big)$ (or~the correspondence of the associated Fuchsian system) is called the middle convolution.
It~was shown in \cite{DR1} that the convolution is related with Euler's integral transformation.
Let~$Y(x)$ be a solution of equation~(\ref{eq:original}).
Set
\begin{gather*}
W_{j}(x) = \frac{Y(x)}{x - t_{j}}, \qquad
W(x) = \begin{pmatrix}
W_{1}(x) \\
\vdots \\
W_{r}(x)
\end{pmatrix}\!.
\end{gather*}
Then $W(x)$ is a column vector with $nr$ entries.
We apply Euler's integral transformation for each entry of $W(x)$, i.e., we set
\begin{gather*}
G(x) = \int_{\Delta}W(s)(x - s)^{\lambda}\,{\rm d}s,
\end{gather*}
where $\Delta $ is an appropriate cycle in $\mathbb{C} $ with the variable~$s$.
Then the function $G(x)$ satisfies the following Fuchsian system of differential equation
\begin{gather*}
\frac{{\rm d}Y}{{\rm d}x} = \bigg( \frac{F_1}{x-t_1}+\frac{F_2}{x-t_2}+\dots + \frac{F_r}{x-t_r} \bigg)Y,
\end{gather*}
where $F_1, \dots, F_r$ were defined in equation~(\ref{eq:Fi}).

Sakai and Yamaguchi \cite{SY} constructed a theory of a $q$-deformation of the middle convolution for systems of $q$-difference equations.
Here the system is described as
\begin{gather*}
Y(qx) = \bigg( B_{\infty} + \frac{B_1}{1 - x/ t_1} +\dots + \frac{B_r}{1 - x/t_r } \bigg) Y(x),
\end{gather*}
where $Y(x)$ is a column vector with $n$ entries and $B_{\infty}, B_1, \dots,B_r$ are constant matrices of size $n \times n$.
The construction of the $q$-middle convolution is similar to the case of the Fuchsian system of differential equations.
For details, see Section~\ref{sec:qMC}.

In this paper we apply the $q$-middle convolution to linear $q$-difference equations which are related to the $q$-Painlev\'e VI equation
\begin{gather}
\frac{y\overline{y}}{a_{3}a_{4}} =
\frac{(\overline{z} - tb_{1})(\overline{z} - tb_{2})}{(\overline{z} - b_{3})(\overline{z} - b_{4})}, \qquad
\frac{z\overline{z}}{b_{3}b_{4}} =
\frac{(y - ta_{1})(y - ta_{2})}{(y - a_{3})(y - a_{4})},\label{eq:qPVI}
\end{gather}
with the constraint $b_1 b_2 a_3 a_4 = q a_1 a_2 b_3 b_4$.
Here $\overline{y}$ and $\overline{z}$ denotes the time evolution $t \mapsto qt$ of $y$ and $z$, and the parameters $a_1,\dots, a_4, b_1,\dots,b_4$ are time-independent.
The $q$-Painlev\'e VI equation was introduced by Jimbo and Sakai \cite{JS} as a $q$-deformation of the Painlev\'e VI equation.
They obtained equation~(\ref{eq:qPVI}) by introducing a $q$-analogue of the monodromy preserving deformation, and it is related to the linear $q$-difference equations
\begin{align}
& Y(qx )= A(x ) Y(x ), \label{eq:JSlinDEqx}
\end{align}
where $A(x)$ is a $2\times 2 $ matrix with polynomial entries (see equation~(\ref{eq:Ax012}) for details).
The linear $q$-difference equation which we apply the $q$-middle convolution is not equation~(\ref{eq:JSlinDEqx}) but the transformed equation
\begin{gather}
Y(qx )= B(x ) Y(x ), \nonumber
\\
 B(x) = \frac{A(x)}{c_0(x-ta_1)(x-ta_2)} = B_{\infty} + \frac{B_1}{1 - x/(ta_1)} + \frac{B_2}{1 - x/(ta_2)}
 \label{eq:JSlinDEqxBintro}
\end{gather}
for some constant $c_0$.
By applying the $q$-middle convolution with the parameter $\lambda $, we obtain $2\times 2$ matrices, if we choose the constants $c_0$ and $\lambda $ suitably.
Then we obtain a correspondence of the parameters, and we may regard it as a correspondence of the $q$-Painlev\'e VI equations.
On the other hand, it is known that the $q$-Painlev\'e VI equation has a symmetry of the affine Weyl group of type $D^{(1)}_5$, and a realization of the symmetry was described in the review \cite{KNY} of Kajiwara, Noumi and Yamada.
In this paper, we express the symmetry by the $q$-middle convolution in terms of the generators of the affine Weyl group.

In \cite{STT1}, a relationship between the $q$-Painlev\'e VI equation and the $q$-Heun equation
\begin{gather*}
\big(x-h_{1}q^{1/2}\big)\big(x-h_{2}q^{1/2}\big)g(x/q)+l_{3}l_{4}\big(x-l_{1}q^{-1/2}\big)\big(x-l_{2}q^{-1/2}\big)g(qx)
\\ \qquad
{}-\big\{(l_{3}+l_{4})x^{2}+Ex+(l_{1}l_{2}l_{3}l_{4}h_{1}h_{2})^{1/2}\big(h_{3}^{1/2}+h_{3}^{-1/2}\big)\big\}g(x)=0
\end{gather*}
was studied from a viewpoint of the initial value space.
In particular, the $q$-Heun equation was obtained from the linear $q$-difference equation associated to the $q$-Painlev\'e VI equation by specializing the parameters.
On the other hand, the $q$-middle convolution induces an integral transformation of the linear $q$-difference equation.
By considering a particular specialization, the linear $q$-difference equation turns out to be the $q$-Heun equation and we obtain an integral transformation of $q$-Heun equation.

This paper is organized as follows.
In Section~\ref{sec:qMC}, we review a part of the theory of the $q$-middle convolution established by Sakai and Yamaguchi~\cite{SY}.
In Section~\ref{sec:JSqMC}, we recall the linear $q$-difference equation associated to the $q$-Painlev\'e VI equation and calculate the $q$-middle convolution for it.
In Section~\ref{sec:qmcWeyl}, we investigate the symmetry by the $q$-middle convolution in terms of the Weyl group symmetry of the $q$-Painlev\'e VI equation.
For this purpose, we clarify a~relationship between equation~(\ref{eq:JSlinDEqxBintro}) and the Lax pair in~\cite{KNY}.
In Section~\ref{sec:ITHeun}, we obtain an integral transformation on the $q$-Heun equation.
In Section~\ref{sec:CR}, we give concluding remarks.

\section[q-middle convolution]
{$\boldsymbol q$-middle convolution} \label{sec:qMC}
We recall the $q$-convolution and the $q$-middle convolution introduced by Sakai and Yamagu\-chi~\cite{SY}.

Let $\mathbf{B}= ( B_{\infty} ; B_{1},\dots,B_N ) $ be the tuple of the square matrices of the same size and $\mathbf{b}= (b_1, b_2, \dots,b_N)$ be the tuple of the non-zero complex numbers which are different from one another.
We denote by $E_{\mathbf{B}, \mathbf{b}}$ the linear $q$-difference equations
\begin{gather*}
Y(q x) = B(x)Y(x), \qquad B(x) = B_{\infty} + \sum^{N}_{i = 1}\frac{B_{i}}{1 - x/b_{i}}.
\end{gather*}

\begin{Definition}[$q$-convolution, \cite{SY}] \label{def:qc}
Let $\mathbf{B}= ( B_{\infty}; B_{1},\dots,B_N ) $ be the tuple of $m\times m $ matrices and $(b_1, b_2, \dots,b_N)$ be the tuple of the non-zero complex numbers which are different one another.
Set $B_0 = I_m - B_\infty - B_{1} - \dots -B_N$,
We define the $q$-convolution $c_\lambda\colon ( B_{\infty};B_{1},\dots,B_N ) \mapsto ( F_{\infty};F_{1},\dots,F_N )$ as follows:
\begin{gather*}
\mbox{\boldmath $F$} = ( F_\infty ; F_1, \dots, F_N) \mbox{ \rm is a tuple of $(N+1)m \times (N+1)m$ matrices,}
\\
F_i = \begin{pmatrix}
 {} & {} & O & {} & {} \\[5pt]
 B_0 & \cdots & B_i - \big(1-q^\lambda\big)I_m & \cdots & B_N \\[5pt]
 {} & {} & O & {} & {}
 \end{pmatrix}
{\scriptstyle(i+1)}, \qquad 1\leq i \leq N, \\[5pt]
 F_\infty = I_{(N+1)m} - \widehat{F}, \qquad
 \widehat{F} = \begin{pmatrix}
 B_0 & \cdots & B_N \\[5pt]
 \vdots & \ddots & \vdots \\[5pt]
 B_0 & \cdots & B_N
 \end{pmatrix}\!.
\end{gather*}
\end{Definition}

Let $\xi \in {\mathbb {C}} \setminus \{ 0 \}$.
The $q$-convolution induces the $q$-analogue of the Euler's integral transformation in terms of the Jackson integral
\begin{gather*}
 \int^{\xi \infty}_{0}f(x) \, {\rm d}_{q}x = (1-q)\sum^{\infty}_{n=-\infty}q^{n} \xi f(q^n \xi )
\end{gather*}
for the solutions of the $q$-difference equations.
Note that the value of the Jackson integral may depend on the value $\xi $.

\begin{Theorem}[{\cite[Theorem 2.1]{SY}}]\label{thm:qcint}
Let $Y(x)$ be a solution of $E_{\mathbf{B}, \mathbf{b}}$.
Set $b_0 =0$ and
\begin{align*}
& P_{\lambda}(x, s) = \frac{\big(q^{\lambda +1} s/x;q\big)_{\infty }}{(q s/x;q)_{\infty }} = \prod^{\infty}_{i = 0}\frac{x - q^{i + \lambda + 1}s}{x - q^{i + 1}s}.
\end{align*}
Define the function $\widehat{Y}(x) $ by
\begin{gather*}
\widehat{Y}_{i}(x) = \int^{\xi \infty}_{0}\frac{P_{\lambda}(x, s)}{s-b_{i}}Y(s) \, {\rm d}_{q}s,\quad i=0,\dots,N,\qquad
\widehat{Y}(x) =\begin{pmatrix} \widehat{Y}_{0}(x) \\ \vdots \\ \widehat{Y}_{N}(x) \end{pmatrix}\!.
\end{gather*}
Then the function $\widehat{Y}(x)$ satisfies the equation $E_{\mathbf{F}, \mathbf{b}}$, i.e.,
\begin{gather*}
\widehat{Y}(q x) = \bigg( F_{\infty} + \sum^{N}_{i = 1}\frac{F_{i}}{1 - x/b_{i}} \bigg) \widehat{Y}(x).
\end{gather*}
\end{Theorem}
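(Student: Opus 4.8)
The plan is to reduce the matrix equation $E_{\mathbf{F},\mathbf{b}}$ to a family of identities among the components $\widehat{Y}_0,\dots,\widehat{Y}_N$, and then to verify each of them by a $q$-integration-by-parts argument built on the difference equation $Y(qs)=B(s)Y(s)$. First I would unpack the shape of $F_\infty$ and the $F_i$. Writing $S:=\sum_{j=0}^{N}B_j\widehat{Y}_j(x)$, the matrix $\widehat{F}$ acts by $\big(\widehat{F}\widehat{Y}\big)_k=S$ for every block $k$, so the $k=0$ component of $E_{\mathbf{F},\mathbf{b}}$ reads $\widehat{Y}_0(qx)=\widehat{Y}_0(x)-S$, while for $k\ge 1$ only $F_k$ contributes through its single nonzero block row and one is left with
\[
\widehat{Y}_k(qx)=\widehat{Y}_k(x)-S+\frac{1}{1-x/b_k}\big[S-\big(1-q^\lambda\big)\widehat{Y}_k(x)\big].
\]
Thus the theorem is equivalent to these $N+1$ identities.

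Next I would record the two $q$-shift relations of the kernel, read off directly from the product form $P_\lambda(x,s)=\prod_{i\ge0}\frac{x-q^{i+\lambda+1}s}{x-q^{i+1}s}$ by telescoping:
\[
P_\lambda(qx,s)=\frac{x-q^\lambda s}{x-s}\,P_\lambda(x,s),\qquad P_\lambda(x,qs)=\frac{x-qs}{x-q^{\lambda+1}s}\,P_\lambda(x,s).
\]
Applying the first relation inside $\widehat{Y}_i(qx)$ and splitting $\frac{x-q^\lambda s}{(x-s)(s-b_i)}$ into partial fractions in $s$ produces, for every $i$ at once,
\[
\widehat{Y}_i(qx)=\frac{\big(q^\lambda-1\big)x}{x-b_i}\,I(x)+\frac{x-q^\lambda b_i}{x-b_i}\,\widehat{Y}_i(x),\qquad I(x):=\int_0^{\xi\infty}\frac{P_\lambda(x,s)}{s-x}Y(s)\,{\rm d}_q s,
\]
where the single auxiliary integral $I(x)$ is independent of $i$. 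A short substitution then shows that both target identities follow provided one proves the key lemma $\big(1-q^\lambda\big)I(x)=S$.

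The heart of the argument, and the step I expect to be the main obstacle, is this evaluation of $I(x)$. Here I would invoke the $q$-analogue of the statement that a total $q$-derivative integrates to zero, namely $\int_0^{\xi\infty}\frac{f(s)-f(qs)}{s}\,{\rm d}_q s=0$ for admissible $f$, together with the shift rule $\int_0^{\xi\infty}f(qs)\,{\rm d}_q s=q^{-1}\int_0^{\xi\infty}f(s)\,{\rm d}_q s$. The decisive choice is $f(s)=P_\lambda(x,s)\big(x-q^\lambda s\big)Y(s)$: by the $s$-shift relation the factor $x-q^{\lambda+1}s$ cancels, and after inserting $Y(qs)=B(s)Y(s)$ and the rewriting $I_m-B(s)=B_0+s\sum_{j\ge1}\frac{B_j}{s-b_j}$, the vanishing integral collapses to a linear relation among $S$, $I(x)$ and $M:=\int_0^{\xi\infty}P_\lambda(x,s)Y(s)\,{\rm d}_q s$. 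The term $M$ is eliminated by re-expressing $\int_0^{\xi\infty}P_\lambda(x,s)Y(qs)\,{\rm d}_q s$ through the shift rule (using $P_\lambda(x,s/q)=\frac{x-q^\lambda s}{x-s}P_\lambda(x,s)$), leaving exactly $\big(1-q^\lambda\big)I(x)=S$. Substituting this back into the displayed formula for $\widehat{Y}_i(qx)$ yields $E_{\mathbf{F},\mathbf{b}}$.

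The genuinely delicate points here are analytic rather than algebraic. One must ensure that the Jackson integrals converge for the chosen $\xi$, so that the bilateral sums are meaningful; that the boundary contributions in the $q$-integration by parts vanish at $0$ and at $\xi\infty$, which is where growth conditions on $Y$ and the admissibility of the cycle enter; and that $x$ is generic, i.e.\ off the $q$-lattice $q^{\mathbb{Z}}\xi$, so that the pole of $\frac{1}{s-x}$ occurring in $I(x)$ does not land on a summation node. Granting these, the algebra is forced and the verification is the bookkeeping described above, with the $b_0=0$ term playing symmetrically alongside the $b_j$, $j\ge1$.
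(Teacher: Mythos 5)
Your proof is correct, but note first that the paper itself contains no proof of Theorem~\ref{thm:qcint}: the statement is imported verbatim from Sakai--Yamaguchi \cite[Theorem~2.1]{SY} (extended from $\xi=1$ to general $\xi$), with all convergence questions explicitly deferred and the Jackson integrals treated formally. So the only possible comparison is with the computations the paper actually performs. Your reduction to the $N+1$ block identities, the kernel relations $P_\lambda(qx,s)=\frac{x-q^\lambda s}{x-s}P_\lambda(x,s)$ and $P_\lambda(x,qs)=\frac{x-qs}{x-q^{\lambda+1}s}P_\lambda(x,s)$, the partial-fraction step giving $\widehat{Y}_i(qx)=\frac{(q^\lambda-1)x}{x-b_i}I(x)+\frac{x-q^\lambda b_i}{x-b_i}\widehat{Y}_i(x)$, and the key lemma $\big(1-q^\lambda\big)I(x)=\sum_{j=0}^{N}B_j\widehat{Y}_j(x)$ all check out, and substituting the lemma back does close both the $k=0$ and $k\geq 1$ identities. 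Your key lemma is in substance the same manipulation the paper carries out in Section~\ref{sec:JSqMC} on the way to equation~(\ref{eq:checky1}): the reindexing $\int y_1(qs)P_\lambda(x,s)\,\frac{{\rm d}_qs}{s}=\int y_1(s)\frac{x-q^\lambda s}{x-s}P_\lambda(x,s)\,\frac{{\rm d}_qs}{s}$ (credited there to \cite{Tkg}) is exactly your shift rule, and $S=\int\frac{Y(s)-Y(qs)}{s}P_\lambda(x,s)\,{\rm d}_qs$ is the general, matrix form of the paper's partial-fraction computation with $P^{-1}$. One small remark: your route through $f(s)=P_\lambda(x,s)\big(x-q^\lambda s\big)Y(s)$ introduces the auxiliary integral $M=\int P_\lambda(x,s)Y(s)\,{\rm d}_qs$, whose summand carries an extra factor of $s$ and therefore can diverge at the $s\to\infty$ end of the lattice even when all the $\widehat{Y}_i(x)$ and $I(x)$ converge; the $M$-terms cancel in the end, and the shorter variant --- reindex $\int\frac{Y(qs)}{s}P_\lambda(x,s)\,{\rm d}_qs$ directly inside $S=\int\frac{Y(s)-Y(qs)}{s}P_\lambda(x,s)\,{\rm d}_qs$ --- avoids $M$ altogether, but within the formal framework the paper declares (and given that you flag the analytic caveats yourself) this is a cosmetic rather than a substantive defect.
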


Although the original theorem by Sakai and Yamaguchi was restricted to the case $\xi =1$ in the Jackson integral, we may just extend it to the case $\xi \in {\mathbb {C}} \setminus \{ 0 \}$, which was motivated by the theory of the Jackson integral due to Aomoto~\cite{Aom}.
The convergence of the Jackson integrals~$\widehat{Y}_{i}(x)$, $i=0,\dots,N$, would not be considered in~\cite{SY}.
In this paper, we discuss the Jackson integrals formally and we do not consider the convergence in details.
Namely, we discuss the Jackson integrals under the assumption that the integrals converge absolutely and Theorem \ref{thm:qcint} holds true with the convergence.
Thus we use the phrasing ``formally'' in theorems which are related to the Jackson integral.
An aspect of convergence on Theorem \ref{thm:qcint} will be discussed in~\cite{AT}.

The $q$-middle convolution is defined by considering an appropriate quotient space.

\begin{Definition}[$q$-middle convolution, \cite{SY}]
We define the $\boldsymbol{F}$-invariant subspaces $\mathcal{K}$ and $\mathcal{L}$ of~$(\mathbb{C}^m )^{N+1}$ as follows
\begin{gather*}
 \mathcal{K} = \mathcal{K}_\mathcal{V} =
\begin{pmatrix} \ker B_0 \\ \vdots \\ \ker B_N \end{pmatrix}\!, \qquad
 \mathcal{L} = \mathcal{L}_{\mathcal{V}}(\lambda) =
 \ker \big(\widehat{F} - \big(1 - q^{\lambda}\big)I_{(N+1)m}\big).
\end{gather*}
We denote the action of $F_k$ on the quotient space $(\mathbb{C}^m )^{N+1}/(\mathcal{K} + \mathcal{L})$ by $\overline{F}_k$, $k=\infty,1, \dots,N$.
Then the $q$-middle convolution $mc_\lambda$ is defined by the correspondence $ E_{\mathbf{B}, \mathbf{b}}\mapsto E_{\overline{\mathbf{F}}, \mathbf{b}} $, where $ \overline{\mathbf{F}} = \big( \overline{F}_{\infty}; \overline{F}_{1},\dots,\overline{F}_N \big) $.
\end{Definition}
The $q$-middle convolution $mc_\lambda$ would induce the integral transformation of the solutions by applying the integral transformation on the $q$-convolution, although it would be necessary to consider the subspace $\mathcal{K} + \mathcal{L} \subset (\mathbb{C}^m )^{N+1}$.

\section[Linear q-difference equation associated to q-Painleve VI equation and q-middle convolution]
{Linear $\boldsymbol q$-difference equation associated \\ to $\boldsymbol q$-Painlev\'e VI equation and $\boldsymbol q$-middle convolution} \label{sec:JSqMC}

We recall the linear $q$-difference equation
\begin{align}
& Y(qx )= A(x ) Y(x ), \label{eq:JSlinDEqx1}
\end{align}
which was discussed by Jimbo and Sakai \cite{JS} to obtain the $q$-Painlev\'e VI equation by the connection preserving deformation.

We take the $2\times 2$ matrix $A(x )$ in equation~(\ref{eq:JSlinDEqx1}) to be of the form
\begin{gather}
A(x)= A_0 (t) +A_1 (t) x +A_2 x^2,\qquad
A_2 = \begin{pmatrix}
\chi _1 & 0 \\ 0 & \chi _2 \end{pmatrix}\!,\nonumber
\\
A_0(t) \mbox{ has eigenvalues } t\theta _1,\ t\theta _2,\nonumber
\\
\det A(x)= \chi _1 \chi _2 (x-ta_1)(x-ta_2)(x-a_3)(x-a_4). \label{eq:Ax012}
\end{gather}
Then we have the following relation
\begin{gather}
\chi_{1}\chi_{2} a_{1} a_{2} a_{3} a_{4} = \theta_{1}\theta_{2}. \label{eq:relation1}
\end{gather}
Note that the relations to the parameter of the $q$-Painlev\'e VI equation in equation~(\ref{eq:qPVI}) are given by $ b_1= a_1 a_2 /\theta _1$, $ b_2= a_1 a_2 /\theta _2$, $b_3 =1/(q\chi _1)$, $b_4 = 1/\chi _2$.

We need accessory parameters to determine uniquely the elements of the matrix $A(x)$.
Write
\begin{gather*}
A(x)=
\begin{pmatrix}
a_{11}(x) & a_{12}(x) \\
a_{21}(x) & a_{22}(x)
\end{pmatrix}\!.
\end{gather*}
Then $a_{12} (x)$ is a linear polynomial.
We introduce the parameters $w$, $y$, $z$ and impose the condition
\begin{gather}
a_{12} (x) = \chi_2 w (x-y),\qquad
a_{11} (x) | _{x=y } = (y - t a_{1})(y -t a_{2}) /(q z). \label{eq:wyz}
\end{gather}
Then the elements of $A(x)$ are determined as
\begin{gather}
	A(x) = \begin{pmatrix}
	 \chi_1((x-y)(x-\alpha)+z_1) & \chi_2w(x-y) \\
	 \chi_1w^{-1}(\gamma x + \delta) & \chi_2((x-y)(x-\beta)+z_2)
	 \end{pmatrix}\!, \label{eq:JSlaxpair}
\end{gather}
where
\begin{gather*}
\alpha = \frac{1}{\chi_1 - \chi_2}\big[y^{-1}((\theta_1 + \theta_2)t
-\chi_1z_1 - \chi_2z_2) - \chi_2((a_1 + a_2)t + a_3+a_4 - 2y)\big],
\\
\beta = \frac{1}{\chi_1 - \chi_2}\big[{-}y^{-1}((\theta_1 + \theta_2)t
-\chi_1z_1 - \chi_2z_2) + \chi_1((a_1 + a_2)t + a_3+a_4 - 2y)\big],
\\
\gamma = z_1+z_2+(y+\alpha)(y+\beta)+(\alpha+\beta)y-a_1a_2t^2-
(a_1+a_2)(a_3+a_4)t -a_3a_4 \nonumber,
\\
\delta = y^{-1}\big(a_1a_2a_3a_4t^2 - (\alpha y+z_1)(\beta y+z_2)\big)
\end{gather*}
and
\begin{gather*}
z_{1} = \frac{(y - ta_{1})(y - ta_{2})}{q\chi_{1}z}, \qquad
z_{2} = q\chi_{1}(y - a_{3})(y - a_{4})z.
\end{gather*}

We consider the $q$-middle convolution for the $q$-difference equation
\begin{align}
& Y(qx )= B(x ) Y(x ),\qquad B(x) = \frac{A(x)}{c_0(x-ta_1)(x-ta_2)}, \label{eq:JSlinDEqxB}
\end{align}
where $c_0$ is a constant which will be fixed later.
Note that, if $\tilde{Y} (x)$ is a solution of equation~(\ref{eq:JSlinDEqx1}) and the parameter $\mu $ satisfies $ q^{\mu } =1/\big(c_0 a_1 a_2 t^2\big)$, then the function
\begin{gather*}
Y(x) = x^{\mu } (x/(ta_1);q)_{\infty } (x/(ta_2);q)_{\infty } \tilde{Y} (x)
\end{gather*}
satisfies equation~(\ref{eq:JSlinDEqxB}).
Write
\begin{gather}
 B(x) = B_{\infty} + \frac{B_1}{1 - x/(ta_1)}
 + \frac{B_2}{1 - x /(ta_2 )} = \begin{pmatrix}
		b_{11}(x) & b_{12}(x) \\
		b_{21}(x) & b_{22}(x)
	\end{pmatrix}\!. \label{eq:Bx}
\end{gather}
Then
\begin{gather}
B_{\infty} = \frac{1}{c_0} \begin{pmatrix}
 \chi _1 & 0 \\
 0 & \chi _2
 \end{pmatrix}\!, \qquad
 B_{2} = B_{1} | _{a_1 \leftrightarrow a_2}, \label{eq:BinftyB2B1}
 \\
B_{1} = \frac{a_2}{t \theta_1 (a_1-a_2)} \begin{pmatrix}
 (y-t a_1 ) b_1^{[1]} / (q y z (\chi _1-\chi _2)) & w \chi _2 (y-t a_1 ) \\[1ex]
 - b_1^{[1]} b_2^{[1]} / \big( q^2 w y^2 z^2 (\chi _1-\chi _2)^2 \big) & -\chi _2 b_2^{[1]} /(q y z (\chi _1-\chi _2) )
 \end{pmatrix}\!, \nonumber
\end{gather}
where
\begin{align*}
b_1^{[1]} &= q^2 \chi _1^2 \chi _2 (y-a_3) (y-a_4) z^2 + (y-t a_2 ) (\chi _2 y- \chi _1 t a_1 )
\\
&\phantom{=} -q \chi _1 \big\{ 2 \chi _2 y^2 - (\chi _1 t a_1 + \chi _2 t a_2 + \chi _2 a_3 + \chi _2 a_4 ) y +t (\theta_1 + \theta_2 ) \big\} z,
\\
b_2^{[1]} &= q^2 \chi _1 (y-a_3) (y-a_4) (\chi _1 y- \chi _2 t a_1 ) z^2 + (y-t a_1 )^2 (y- t a_2 )
\\
&\phantom{=} -q (y- t a_1 ) \big\{ 2 \chi _1 y^2 -(\chi _2 t a_1 + \chi _1 t a_2 + \chi _1 a_3 + \chi _1 a_4) y + t (\theta_1 + \theta_2 ) \big\} z.
\end{align*}
It is shown directly that $\det B_1=0$ and $\det B_2 =0$.
Set $B_0 = I_2 - B_{\infty} - B_1 - B_2 (= I_2 - B(0))$.
Then the condition $\det B_0=0$ is equivalent to $c_0 = \theta_1 /(ta_1a_2 ) $ or $c_0 = \theta_2 /(ta_1a_2 ) $.
We now impose the condition $\det B_0=0 $.
For this purpose, we restrict to the case $c_0 = \theta_1 /(t a_1a_2 ) $.
Note that the case $c_0= \theta_2 /(t a_1a_2 ) $ can be discussed by replacing the parameters as $\theta _1 \leftrightarrow \theta _2 $.
We eliminate the parameter $\theta_{2}$ by equation~(\ref{eq:relation1}).
It follows from $\det B_0=0$, $\det B_1=0$ and $\det B_2=0$ that there exists non-zero vectors
$\left(\begin{smallmatrix} v_{01} \\ v_{02} \end{smallmatrix} \right)$,
$\left(\begin{smallmatrix} v_{11} \\ v_{12} \end{smallmatrix} \right)$,
$\left(\begin{smallmatrix} v_{21} \\ v_{22} \end{smallmatrix} \right)$
 such that
\begin{gather}
 B_0\begin{pmatrix} v_{01} \\ v_{02} \end{pmatrix} =
 \begin{pmatrix} 0 \\ 0 \end{pmatrix}\!, \qquad
 B_1\begin{pmatrix} v_{11} \\ v_{12} \end{pmatrix} =
 \begin{pmatrix} 0 \\ 0 \end{pmatrix}\!, \qquad
 B_2\begin{pmatrix} v_{21} \\ v_{22} \end{pmatrix} =
 \begin{pmatrix} 0 \\ 0 \end{pmatrix}\!.
\label{eq:v01v02v11v12v21v22}
\end{gather}
We normalize the vectors by setting
\begin{gather}
v_{01} = q w y z \theta_1 (\chi_1 -\chi_2), \quad \
v_{11} = q w y z \theta_1 \chi_2 (\chi_1 -\chi_2), \quad \
v_{21} = q w y z \theta_1 \chi_2 (\chi_1 -\chi_2).
\label{eq:v01v11v21}
\end{gather}
We now apply Definition \ref{def:qc}.
Namely we set
\begin{gather}
 F_1 = \begin{pmatrix}
 {O} & O & {O} \\
 B_0 & B_1 - \big(1-q^\lambda\big)I_2 & B_2 \\
 {O} & O & {O}
 \end{pmatrix}\!, \qquad
 F_2 = \begin{pmatrix}
 {O} & O & {O} \\
 {O} & O & {O} \\
 B_0 & B_1 & B_2 - \big(1-q^\lambda\big)I_2
 \end{pmatrix}\!, \nonumber
 \\
 \widehat{F} = \begin{pmatrix}
 B_0 & B_1 & B_2 \\
 B_0 & B_1 & B_2 \\
 B_0 & B_1 & B_2
 \end{pmatrix}\!, \qquad
 F_{\infty} = I_6 - \widehat{F}.\label{eq:F1F2Finf}
\end{gather}
The invariant subspaces $\mathcal{K} $ and $\mathcal{L} $ are described as
\begin{gather}
 \mathcal{K} = \mathcal{K}_\mathcal{V} = \begin{pmatrix}
 \ker B_0 \\
 \ker B_1 \\
 \ker B_2
 \end{pmatrix}\!, \qquad
 \mathcal{L} = \mathcal{L}_{\mathcal{V}}(\lambda) =
 \ker \big(\widehat{F} - \big(1 - q^{\lambda}\big)I_6\big).
\label{eq:KL}
\end{gather}
Hence a basis of the space $\mathcal{K}$ is
\begin{gather*}
\left\{\!
\begin{pmatrix} v_{01} \\ v_{02} \\ 0 \\ 0 \\ 0 \\ 0 \end{pmatrix}\!,
\begin{pmatrix} 0 \\ 0 \\ v_{11} \\ v_{12} \\ 0 \\ 0 \end{pmatrix}\!,
\begin{pmatrix} 0 \\ 0 \\ 0 \\ 0 \\ v_{21} \\ v_{22} \\ \end{pmatrix}\!
 \right\}\!.
\end{gather*}
If $q^{\lambda}= \chi_{2} t a_1a_2 / \theta_1 $ (resp.~$q^{\lambda}= \chi_{1} t a_1a_2 / \theta_1 $) then $ \dim (\mathcal{L}) =1$ and the vector ${}^{t} (0,1,0,1,0,1)$ (resp.~${}^{t} (1,0,1,0,1,0)$) is a basis of the space $\mathcal{L} $.
Here we continue the discussion by setting~$q^{\lambda}= \chi_{2} t a_1a_2 / \theta_1 $.

We introduce the matrix $P$ by
\begin{gather}
P = \begin{pmatrix}
0 & 0 & 0 & v_{01} & 0 & 0 \\
g_{1} & g_{3} & 1 & v_{02} & 0 & 0 \\
0 & 0 & 0 & 0 & v_{11} & 0 \\
g_{2} & g_{4} & 1 & 0 & v_{12} & 0 \\
0 & 0 & 0 & 0 & 0 & v_{21} \\
 0 & 0 & 1 & 0 & 0 & v_{22} \\
\end{pmatrix}\!, \qquad
\begin{array}{l}
\displaystyle g_{1} = qz\theta_1 + y a_2 - q y z \chi_1 a_2 - t a_1a_2,\\[.5ex]
\displaystyle g_{2} = y ( q z\chi _1 -1 ) (a_1 - a_2),\\[.5ex]
\displaystyle g_{3} = -a_2(y - t a_1 ),\\[.5ex]
\displaystyle g_{4} = y (a_1 - a_2).
\end{array}
\label{eq:P}
\end{gather}
Then $\det P = -q^4 w^3 y^4 z^4 \theta_1^3 \chi _2^2 (\chi _1 -\chi _2)^3 (a_1-a_2) (\chi _1 t a_1 a_2 -\theta_1 )$, and the matrix $P$ is invertible if $\det P \neq 0 $.
Set
\begin{gather*}
\widetilde{F}_{1} = P^{-1}F_{1}P, \qquad
\widetilde{F}_{2} = P^{-1}F_{2}P, \qquad
\widetilde{F}_{\infty} = P^{-1}{F}_{\infty }P.
\end{gather*}
Then it follows from the invariance of the space $\mathcal{K} + \mathcal{L} $ that the $i.j$ elements of these matrices for $i \in \{ 1,2 \}$ and $j \in \{3,4,5,6 \}$ are equal to zero.
Thus, they admit the following expression:
\begin{gather*}
\widetilde{F}_{1} = \begin{pmatrix}
\overline{F}_{1} & O \\* & *
\end{pmatrix}\!, \qquad
\widetilde{F}_{2} = \begin{pmatrix}
\overline{F}_{2} & O \\ * & *
\end{pmatrix}, \qquad
\widetilde{F}_{\infty} = \begin{pmatrix}
\overline{F}_{\infty} & O \\* & *
\end{pmatrix}\!,
\end{gather*}
where $\overline{F}_{1}$, $\overline{F}_{2}$, $\overline{F}_{\infty}$ are $2\times 2$ matrices.
The matrix $\overline{F}_{\infty}$ is diagonal, which follows from the choice of the parameters $g_1, \dots, g_4$.
We can restrict the $q$-difference equation
\begin{gather}
\check{Y}(qx) =
\bigg( \widetilde{F}_{\infty } + \frac{\widetilde{F}_{1}}{1 - x/(ta_{1})} + \frac{\widetilde{F}_{2}}{1 - x/(ta_{2})} \bigg)\check{Y}(x) \label{eq:chYweTchY}
\end{gather}
of size $6$ to that of size $2$ by choosing the first two components and we write
\begin{gather}
\overline{Y} (qx) = \overline{F}(x) \overline{Y} (x), \qquad
\overline{F}(x) = \overline{F}_{\infty} +
\frac{\overline{F}_{1}}{1 - x/(ta_{1})} + \frac{\overline{F}_{2}}{1 - x/(ta_{2})}. \label{eq:appqmc}
\end{gather}
Then
\begin{gather}
\overline{F}_{\infty} = \begin{pmatrix}
 \chi_1 t a_1 a_2 /\theta_1 & 0 \\ 0 & 1
 \end{pmatrix}\!, \qquad
 \overline{F}_{2} = \overline{F}_{1} | _{a_1 \leftrightarrow a_2}, \nonumber
 \\
\overline{F}_{1} = \frac{a_1}{q y z \theta_1 (a_1-a_2)(\theta_1 - \chi_1 t a_1 a_2 )}
\begin{pmatrix}
 -(y-t a_1 ) f_1^{[1]} a_2^2 & -f_2^{[1]} (y- t a_1 ) a_2 \\[.5ex]
 f_1^{[1]} f_3^{[1]} a_2 & f_2^{[1]} f_3^{[1]}
 \end{pmatrix}\!,\label{eq:olFinfty12}
\end{gather}
where
{\samepage\begin{gather*}
f_1^{[1]} = q^2 \chi_1^2 \chi_2 (y-a_3) (y-a_4) z^2 + (y-t a_2 ) (y \chi_2-\chi_1 t a_1 )
\\ \phantom{f_1^{[1]}=}
{}-q \chi_1 \{ 2 \chi_2 y^2 - (\chi_1 t a_1 +\chi_2 t a_2 +\chi_2 a_3 +\chi_2 a_4 ) y +t ( \theta_1 + \theta_2 ) \} z,
\\
f_2^{[1]} = q \chi_1 \chi_2 a_2 (y-a_3) (y-a_4) z -(y- t a_2 ) (a_2 \chi_2 y-\theta_1),
\\
f_3^{[1]} = q (y a_2 \chi_1-\theta_1) z -a_2 (y-t a_1 ).
\end{gather*}}

It is expected that the $q$-middle convolution induces an integral transformation.
Let $Y(x)$ be a solution to $Y(qx) = B(x)Y(x)$ in equation~(\ref{eq:JSlinDEqxB}) and write
\begin{gather*}
Y(x)=\begin{pmatrix}y_{1}(x) \\y_{2}(x)\end{pmatrix}\!.
\end{gather*}
It follows from Theorem \ref{thm:qcint} that the $q$-difference equation
\begin{gather*}
\widehat{Y}(qx) = F(x)\widehat{Y}(x), \qquad
F(x) = F_{\infty} + \frac{F_{1}}{1 - x/(ta_{1})} + \frac{F_{2}}{1 - x/(ta_{2})}
\end{gather*}
has a solution written as
\begin{gather}
\widehat{Y}(x) =
\begin{pmatrix}
\widehat{Y}_{0}(x) \\
\widehat{Y}_{1}(x) \\
\widehat{Y}_{2}(x)
\end{pmatrix} =
\begin{pmatrix}
\int^{\xi \infty}_{0} s^{-1 } P_{\lambda}(x, s)y_{1}(s) \, {\rm d}_{q}s \\[5pt]
\int^{\xi \infty}_{0} s^{-1 } P_{\lambda}(x, s)y_{2}(s) \, {\rm d}_{q}s \\[5pt]
\int^{\xi \infty}_{0} (s - ta_{1})^{-1 } P_{\lambda}(x, s)y_{1}(s) \, {\rm d}_{q}s \\[5pt]
\int^{\xi \infty}_{0} (s - ta_{1})^{-1 } P_{\lambda}(x, s)y_{2}(s) \, {\rm d}_{q}s \\[5pt]
\int^{\xi \infty}_{0} (s - ta_{2})^{-1 } P_{\lambda}(x, s)y_{1}(s) \, {\rm d}_{q}s \\[5pt]
\int^{\xi \infty}_{0} (s - ta_{2})^{-1 } P_{\lambda}(x, s)y_{2}(s) \, {\rm d}_{q}s
\end{pmatrix}\!. \label{eq:Yhat}
\end{gather}
Set $\check{Y}(x) = P^{-1}\widehat{Y}(x)$.
Then it satisfies equation~(\ref{eq:chYweTchY}).
Write
\begin{gather*}
\check{Y}(x) =
\begin{pmatrix}
\check{y}_{1}(x) \\\check{y}_{2}(x) \\\vdots \\\check{y}_{6}(x)
\end{pmatrix}, \qquad
P^{-1} = \begin{pmatrix}
p_{11} & p_{12} & \cdots & p_{16} \\
p_{21} & p_{22} & \cdots & p_{26} \\
\vdots & \vdots & \ddots & \vdots \\
p_{61} & p_{62} & \cdots & p_{66}
\end{pmatrix}\!.
\end{gather*}
Then the function
$\overline{Y}(x) = \left(\!
\begin{smallmatrix}
\check{y}_1 (x) \\
\check{y}_2 (x)
\end{smallmatrix}\!\right) $
satisfies equation~(\ref{eq:appqmc}).
On the other hand, it follows from equation~(\ref{eq:Yhat}) that
\begin{align*}
\check{y}_{1}(x) &= \int^{\xi \infty}_{0} \bigg\{\bigg( \frac{p_{11}}{s} + \frac{p_{13}}{s - ta_{1}} + \frac{p_{15}}{s - ta_{2}}\bigg)y_{1}(s)
+ \bigg( \frac{p_{12}}{s} + \frac{p_{14}}{s - ta_{1}} + \frac{p_{16}}{s - ta_{2}} \bigg)y_{2}(s) \bigg\}
\\
&\hphantom{=}\times P_{\lambda}(x, s)\, {\rm d}_{q}s.
\end{align*}
By a straightforward calculation, we have
\begin{gather*}
\frac{p_{12}}{s} + \frac{p_{14}}{s - ta_{1}} + \frac{p_{16}}{s - ta_{2}} = \frac{t \theta_{1}}{q w y z \chi_{2} (\chi_{1} t a_{1}a_{2} - \theta_{1})s} b_{12}(s),
\\
\frac{p_{11}}{s} + \frac{p_{13}}{s - ta_{1}} + \frac{p_{15}}{s - ta_{2}} = \frac{t \theta_{1}}{q w y z \chi_{2} (\chi_{1} t a_{1}a_{2} - \theta_{1})s} (b_{11}(s) -1),
\end{gather*}
where $b_{12}(s) $ and $b_{11}(s) $ are elements of the matrix $B(s)$ in equation~(\ref{eq:Bx}).
Therefore
\begin{gather*}
\bigg( \frac{p_{11}}{s} + \frac{p_{13}}{s - ta_{1}} + \frac{p_{15}}{s - ta_{2}} \bigg)y_{1}(s) +
\bigg( \frac{p_{12}}{s} + \frac{p_{14}}{s - ta_{1}} + \frac{p_{16}}{s - ta_{2}} \bigg)y_{2}(s)
\\ \qquad
{} = \frac{t \theta_{1}}{q w y z \chi_{2} (\chi_{1} t a_{1}a_{2} - \theta_{1})s}
\{ - y_{1}(s) + b_{11}(s)y_{1}(s) + b_{12}(s)y_{2}(s) \}.
\end{gather*}
Hence it follows from $ y_{1}(qs) = b_{11}(s)y_{1}(s) + b_{12}(s)y_{2}(s) $ that
\begin{gather*}
\check{y}_{1}(x) = \frac{t \theta_{1}}{q w y z \chi_{2} (\chi_{1} t a_{1}a_{2} - \theta_{1})} \int^{\xi \infty}_{0}\frac{y_{1}(qs) - y_{1}(s)}{s}P_{\lambda}(x, s)\, {\rm d}_{q}s.
\end{gather*}
We are going to obtain the integral representation without using $y_{1}(qs)$.
It follows from the definitions of the Jackson integral and the function $P_{\lambda}(x, s) $ that
\begin{gather*}
\int^{\xi \infty}_{0} y_{1}(qs)P_{\lambda}(x, s)\, \frac{{\rm d}_{q}s}{s} =
 \int^{\xi \infty}_{0} y_{1}(s)P_{\lambda} (x, s/q ) \frac{{\rm d}_{q}s}{s} =
 \int^{\xi \infty}_{0} y_{1}(s)\, \frac{x - q^{\lambda}s}{x - s}P_{\lambda}(x, s)\, \frac{{\rm d}_{q}s}{s}.
\end{gather*}
See \cite{Tkg} for details. Hence
\begin{align*}
\int^{\xi \infty}_{0}\frac{y_{1}(qs) - y_{1}(s)}{s}P_{\lambda}(x, s)\, {\rm d}_{q}s& = \int^{\xi \infty}_{0}\frac{1}{s}\bigg( \frac{x - q^{\lambda}s}{x - s} - 1\bigg)y_{1}(s)P_{\lambda}(x, s)\, {\rm d}_{q}s
\\[1ex]
& = (1 - q^{\lambda})\int^{\xi \infty}_{0}\frac{y_{1}(s)}{x - s}P_{\lambda}(x, s)\, {\rm d}_{q}s.
\end{align*}
We recall that $ q^{\lambda} = \chi_{2} t a_1a_2 /\theta_1 $.
Thus, we obtain
\begin{gather}
 \check{y}_{1}(x) = \frac{t (\chi_{2} t a_1a_2 - \theta_{1})}{q w y z \chi_{2} (\chi_{1} t a_{1}a_{2} - \theta_{1})} \int^{\xi \infty}_{0}\frac{y_{1}(s)}{s - x}P_{\lambda}(x, s)\, {\rm d}_{q}s. \label{eq:checky1}
\end{gather}
We also calculate $\check{y}_{2}(x)$.
It follows from equation~(\ref{eq:Yhat}) that
\begin{align*}
\check{y}_{2 }(x) &= \int^{\xi \infty}_{0} \bigg\{ \bigg( \frac{p_{21}}{s} + \frac{p_{23}}{s - ta_{1}} + \frac{p_{25}}{s - ta_{2}}\bigg)y_{1}(s)
 + \bigg( \frac{p_{22}}{s} + \frac{p_{24}}{s - ta_{1}} + \frac{p_{26}}{s - ta_{2}} \bigg)y_{2}(s) \bigg\}
 \\
 &\hphantom{=}\times P_{\lambda}(x, s)\, {\rm d}_{q}s.
\end{align*}
By a straightforward calculation, we have
\begin{gather*}
\frac{p_{22}}{s} + \frac{p_{24}}{s - ta_{1}} + \frac{p_{26}}{s - ta_{2}} = \frac{\theta_1 \{ ( t a_1 a_2 -q z \theta_1 ) s + y t a_1 a_2 ( q z \chi_1 - 1) \} }{q w y z \chi_2 a_1 a_2 (\chi_1 t a_1 a_2 -\theta_1) (s-y) s} b_{12}(s),
\\
\frac{p_{21}}{s} + \frac{p_{23}}{s - ta_{1}} + \frac{p_{25}}{s - ta_{2}} = \frac{\theta_1 \{ ( t a_1 a_2 -q z \theta_1 ) s + y t a_1 a_2 ( q z \chi_1 - 1) \} }{q w y z \chi_2 a_1 a_2 (\chi_1 t a_1 a_2 -\theta_1) (s-y) s} b_{11}(s)
\\ \hphantom{\frac{p_{21}}{s} + \frac{p_{23}}{s - ta_{1}} + \frac{p_{25}}{s - ta_{2}} = }
{} -\frac{t \{ (t a_1 a_2 - q z \theta_1 ) \chi_1 s + y \theta_1 (q z \chi_1 -1 ) \}}{q w y z \chi_2 (\chi_1 t a_1 a_2 -\theta_1) (s-y) s}.
\end{gather*}
Therefore it follows from $ y_{1}(qs) = b_{11}(s)y_{1}(s) + b_{12}(s)y_{2}(s) $ that
\begin{gather*}
 \bigg( \frac{p_{21}}{s} + \frac{p_{23}}{s - ta_{1}} + \frac{p_{25}}{s - ta_{2}} \bigg)y_{1}(s) +
\bigg( \frac{p_{22}}{s} + \frac{p_{24}}{s - ta_{1}} + \frac{p_{26}}{s - ta_{2}} \bigg)y_{2}(s)
\\ \qquad
{} = \frac{\theta_1 \{ ( t a_1 a_2 -q z \theta_1 ) s + y t a_1 a_2 ( q z \chi_1 - 1) \} }{q w y z \chi_2 a_1 a_2 (\chi_1 t a_1 a_2 -\theta_1) (s-y) s} y_1 (qs)
\\ \qquad\phantom{=}
{} - \frac{t \{ ( t a_1 a_2 - q z \theta_1 ) \chi_1 s + y \theta_1 (q z \chi_1 -1 ) \}}{q w y z \chi_2 (\chi_1 t a_1 a_2 -\theta_1) (s-y) s} y_1(s).
\end{gather*}
We obtain that
\begin{gather*}
\int^{\xi \infty}_{0} \frac{(t a_1 a_2 -q z \theta_1 )s + t a_1 a_2 y (q z \chi_1 -1) }{ s-y } y_1 (qs) P_{\lambda}(x, s)\, \frac{{\rm d}_{q}s}{s}
\\ \qquad
{} = \int^{\xi \infty}_{0}\frac{(t a_1 a_2 -q z \theta_1 ) s + q t a_1 a_2 y ( q z \chi_1 - 1) }{ s - q y } y_{1}(s) P_{\lambda} (x, s/q ) \frac{{\rm d}_{q}s}{s}
\\ \qquad
{}= \int^{\xi \infty}_{0} \frac{(t a_1 a_2 -q z \theta_1 ) s + q t a_1 a_2 y ( q z \chi_1 - 1) }{s - q y } \bigg( 1 + \frac{(1 - q^{\lambda})s}{x - s} \bigg) y_{1}(s) P_{\lambda}(x, s)\, \frac{{\rm d}_{q}s}{s}.
\end{gather*}
Hence
\begin{align}
\check{y}_{2 }(x) &= \frac{1}{q w y z \chi_2 a_1 a_2} \int^{\xi \infty}_{0} \bigg\{ \frac{ q z \theta_1 }{ s-q y } - \frac{ t a_1 a_2 }{ s-y }
+ \bigg( \frac{q z \theta_1 -t a_1 a_2 }{\chi_1 t a_1 a_2 -\theta_1} - \frac{q ^2 y z }{ s - q y } \bigg) \frac{ \chi_2 t a_1 a_2 -\theta_1}{x - s} \bigg\} \nonumber
\\
&\hphantom{=}\times y_{1}(s) P_{\lambda}(x, s)\, {\rm d}_{q}s.\label{eq:checky2}
\end{align}
In summary, we obtain the following theorem by the $q$-middle convolution.

\begin{Theorem} \label{thm:inttrans}
Let $Y(x)$ be a solution to
\begin{gather}
 Y(qx)= \bigg( B_{\infty} + \frac{B_1}{1 - x/(ta_1)} + \frac{B_2}{1 - x /(ta_2 )}\bigg) Y(x), \qquad Y(x)= \begin{pmatrix}y_1 (x) \\y_2 (x)\end{pmatrix}\!, \label{eq:Yqmcthm}
\end{gather}
where $B_{\infty}$, $B_1$ and $B_2$ are defined in equation~\eqref{eq:BinftyB2B1}.
The function
\begin{gather*}
\overline{Y}(x) = \begin{pmatrix}
\check{y}_1 (x) \\ \check{y}_2 (x)
\end{pmatrix}
\end{gather*}
defined by equations~\eqref{eq:checky1} and \eqref{eq:checky2} formally satisfies
\begin{gather}
\overline{Y} (qx) = \bigg(\overline{F}_{\infty} + \frac{\overline{F}_{1}}{1 - x/(ta_{1})} + \frac{\overline{F}_{2}}{1 - x/(ta_{2})}\bigg) \overline{Y} (x), \label{eq:appqmcthm}
\end{gather}
where $\overline{F}_{\infty}$, $\overline{F}_1$ and $\overline{F}_2$ are defined in equation~\eqref{eq:olFinfty12}.
\end{Theorem}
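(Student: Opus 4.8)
The plan is to realize $\overline{Y}$ as the first two components of the $q$-convolution transform and then to identify the resulting Jackson integrals with the closed formulas \eqref{eq:checky1} and \eqref{eq:checky2}. First I would invoke Theorem~\ref{thm:qcint}: starting from a solution $Y(x)={}^{t}(y_1(x),y_2(x))$ of \eqref{eq:Yqmcthm}, the six-component vector $\widehat{Y}(x)$ of \eqref{eq:Yhat} solves $\widehat{Y}(qx)=F(x)\widehat{Y}(x)$ with $F_\infty,F_1,F_2$ as in \eqref{eq:F1F2Finf}. Setting $\check{Y}(x)=P^{-1}\widehat{Y}(x)$ with the matrix $P$ of \eqref{eq:P} conjugates this system into \eqref{eq:chYweTchY}, so it remains to extract a closed two-dimensional subsystem and to evaluate its solution explicitly.

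The structural step is the reduction from size six to size two. Since the columns of $P$ indexed by $3,4,5,6$ are precisely a basis of the $\boldsymbol{F}$-invariant subspace $\mathcal{K}+\mathcal{L}$ of \eqref{eq:KL} (the column~$3$ realizing the generator of $\mathcal{L}$ and the columns $4,5,6$ the generators of $\mathcal{K}$), conjugation by $P$ sends each $F_k$ to a matrix $\widetilde{F}_k$ whose $(i,j)$ entries with $i\in\{1,2\}$ and $j\in\{3,4,5,6\}$ all vanish. Hence the top-left $2\times 2$ blocks $\overline{F}_k$ form a consistent subsystem acting on ${}^{t}(\check y_1,\check y_2)$, and $\overline{Y}$ satisfies \eqref{eq:appqmcthm} with the matrices \eqref{eq:olFinfty12}. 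This is exactly the passage recorded around \eqref{eq:appqmc}, and it relies only on the $\boldsymbol{F}$-invariance of $\mathcal{K}+\mathcal{L}$ together with the explicit basis.

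The analytic heart is the explicit evaluation of $\check y_1$ and $\check y_2$. I would read off the first two rows of $P^{-1}$ and, for each of $\check y_1,\check y_2$, collect the three partial fractions $1/s$, $1/(s-ta_1)$, $1/(s-ta_2)$ multiplying $y_1(s)$ and $y_2(s)$ into combinations of the matrix entries $b_{11}(s),b_{12}(s)$ of \eqref{eq:Bx}. The functional equation $y_1(qs)=b_{11}(s)y_1(s)+b_{12}(s)y_2(s)$ then eliminates $y_2$ in favour of the shifted value $y_1(qs)$. Finally I would remove $y_1(qs)$ via the Jackson-integral shift identity $\int^{\xi\infty}_{0} y_1(qs)P_\lambda(x,s)\,\frac{{\rm d}_{q}s}{s}=\int^{\xi\infty}_{0} y_1(s)\,\frac{x-q^\lambda s}{x-s}P_\lambda(x,s)\,\frac{{\rm d}_{q}s}{s}$ (and its weighted variant, with the rational factor $\frac{(ta_1a_2-qz\theta_1)s+\cdots}{s-y}$, for $\check y_2$), followed by the substitution $q^{\lambda}=\chi_2 ta_1a_2/\theta_1$, which produces the stated kernels with denominators $s-x$ and $s-y,\,s-qy$.

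The main obstacle is computational rather than conceptual: inverting the $6\times 6$ matrix $P$ and checking that the partial-fraction sums collapse exactly to scalar multiples of $b_{11}(s)-1$ and $b_{12}(s)$ with the common prefactor $t\theta_1/\bigl(qwyz\chi_2(\chi_1 ta_1a_2-\theta_1)\bigr)$ demands careful bookkeeping with the parameters $g_1,\dots,g_4$ and the normalizations \eqref{eq:v01v11v21}. The only genuinely nonroutine point is the Jackson-integral shift, which I treat formally under the standing absolute-convergence assumption, so that the translation $s\mapsto qs$ is valid term by term without boundary contributions.
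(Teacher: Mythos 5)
Your proposal is correct and follows essentially the same route as the paper: invoke Theorem~\ref{thm:qcint} to get the six-component solution $\widehat{Y}$ of the convolved system, conjugate by $P$ (whose columns $3,4,5,6$ span $\mathcal{K}+\mathcal{L}$, column $3$ being the $\mathcal{L}$-generator ${}^{t}(0,1,0,1,0,1)$), use the $\boldsymbol{F}$-invariance of $\mathcal{K}+\mathcal{L}$ to close the top $2\times2$ block, and then evaluate $\check y_1,\check y_2$ by collapsing the partial fractions of the rows of $P^{-1}$ onto $b_{11}(s)-1$ and $b_{12}(s)$, eliminating $y_2$ via $y_1(qs)=b_{11}(s)y_1(s)+b_{12}(s)y_2(s)$, and applying the Jackson-integral shift with $q^{\lambda}=\chi_2 ta_1a_2/\theta_1$. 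This is exactly the derivation the paper records between equations~\eqref{eq:Yhat} and~\eqref{eq:checky2}, treated formally under the same convergence caveat.
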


Thus, we obtain the correspondence of the systems of linear $q$-difference equations associated with the $q$-Painlev\'e VI equation by the $q$-middle convolution.
To give the correspondence of the parameters by the $q$-middle convolution in the form of the equation $Y(qx)= \big\{ A_0 (t) + A_1 (t) x\allowbreak +A_2 x^2 \big\} Y(x) $ in equation~(\ref{eq:Ax012}), we need to transform equation~(\ref{eq:appqmcthm}).

Let $\widetilde{c}$ be a non-zero constant which will be fixed later. Set
\begin{gather}
\widetilde{A}(x) = \widetilde{c} (x- t a_1 ) (x- t a_2 ) \bigg( \overline{F}_{\infty} + \frac{\overline{F}_{1}}{1 - x/(ta_{1})} + \frac{\overline{F}_{2}}{1 - x/(ta_{2})} \bigg) \label{eq:Atilde}
\end{gather}
and write $\widetilde{A}(x) = \widetilde{A}(x, t) = \widetilde{A}_{0}(t) + \widetilde{A}_{1}(t)x + \widetilde{A}_{2}x^{2}$. Then we have
\begin{gather*}
\widetilde{A}_{2} = \begin{pmatrix}
\widetilde{c} \chi_{1} t a_{1}a_{2} /\theta_{1} & 0 \\ 0 & \widetilde{c}
\end{pmatrix}\!,
\\[1ex]
\widetilde{A}_{0}(t) \mbox{ has the eigenvalues } \frac{\widetilde{c} \chi_{2} t^{3} a_{1}^{2}a_{2}^{2}}{\theta_{1}} \mbox{ and } \frac{\widetilde{c} t^{2} a_{1}a_{2}\theta_{2}}{\theta_{1}},
\\
\det \widetilde{A}(x, t) =
\frac{\widetilde{c} ^{2}\chi_{1} t a_{1}a_{2}}{\theta_{1}}(x - ta_{1})(x - ta_{2})
\bigg(x - \frac{\chi_{2}t a_{1}a_{2}a_{3}}{\theta_{1}}\bigg) \bigg(x - \frac{\chi_{2} t a_{1}a_{2}a_{4}}{\theta_{1}}\bigg).
\end{gather*}
Hence the action of the $q$-middle convolution to the parameters is described as
\begin{gather}
\chi_{1}\rightarrow \frac{\widetilde{c} \chi_{1} t a_{1}a_{2}}{\theta_{1}}, \qquad
\chi_{2} \rightarrow \widetilde{c}, \qquad
\{ t\theta_{1}, t\theta_{2} \} \rightarrow \bigg\{ \frac{\widetilde{c} t^{2} a_{1}a_{2} \theta_{2}}{\theta_{1}}, \frac{\widetilde{c} \chi_{2} t^{3} a_{1}^{2}a_{2}^{2}}{\theta_{1}} \bigg\},\nonumber
\\
\{a_{1}, a_{2 } \} \rightarrow \{ a_{1}, a_{2 } \}, \qquad
\{ a_{3}, a_{4 } \} \rightarrow \bigg\{ \frac{\chi_{2} t a_{1}a_{2}a_{3}}{\theta_{1}}, \frac{\chi_{2} t a_{1}a_{2}a_{4}}{\theta_{1}} \bigg\}. \label{eq:mctoparameter}
\end{gather}
We investigate the action to the parameters $y$ and $z$.
Recall that $y$ and $z$ are determined by equation~(\ref{eq:wyz}).
We denote the images of $y$ and $z$ by $\widetilde{y}$ and $\widetilde{z}$.
Let $\widetilde{a}_{11} (x)$ (resp.~$\widetilde{a}_{12} (x)$) be the upper left entry (resp.~the upper right entry) of the matrix $\widetilde{A}(x)$.
Then the value $\widetilde{y}$ is the zero of the linear function $\widetilde{a}_{12} (x)$, and we have
\begin{align}
\widetilde{y} &=
\frac{\chi_{2} t a_{1}a_{2} \{ q\chi_{1}(y - a_{3})(y - a_{4})z - (y - ta_{1})(y - ta_{2}) \}}{q\chi_{1}\chi_{2} t a_{1}a_{2} (y - a_{3})(y - a_{4})z - \theta_{1}(y - ta_{1})(y - ta_{2})}\, y \nonumber
\\
&= \frac{qz\dfrac{(y-a_{3})(y-a_{4})}{(y-ta_{1})(y-ta_{2})} - \dfrac{1}{\chi_{1}}}{ qz\dfrac{(y-a_{3})(y-a_{4})}{(y-ta_{1})(y-ta_{2})} - \dfrac{\theta_{1}}{\chi_{1}\chi_{2}t a_{1}a_{2}}}\, y.\label{ytilde}
\end{align}
The value $\widetilde{z}$ satisfies $\widetilde{a}_{11} (x) | _{x=\widetilde{y} } = (\widetilde{y} - t a_{1})(\widetilde{y} -t a_{2}) /(q \widetilde{z}) $.
Since
\begin{gather*}
\widetilde{a}_{11} (x) | _{x=\widetilde{y} }= \frac{\widetilde{c}}{q \chi _2 z} \frac{(y - ta_{1})(y - ta_{2})}{(y - a_{3})(y - a_{4})} \bigg( \widetilde{y} - \frac{\chi_{2}t a_{1}a_{2}a_{3}}{\theta_{1}}\bigg) \bigg( \widetilde{y} - \frac{\chi_{2}t a_{1}a_{2}a_{4}}{\theta_{1}} \bigg),
\end{gather*}
we have
\begin{gather}
\widetilde{z} = z \frac{\chi _2 }{\widetilde{c} }
\frac{(y-a_{3})(y-a_{4})}{(y-ta_{1})(y-ta_{2})}
\frac{(\widetilde{y} - ta_{1})(\widetilde{y} - ta_{2})}
{( \widetilde{y} - \chi_{2} t a_{1}a_{2}a_{3}/\theta_{1})
( \widetilde{y} - \chi_{2} t a_{1}a_{2}a_{3}/\theta_{1})}. \label{eq:ztilde}
\end{gather}

\section[q-middle convolution and Weyl group symmetry of q-Painleve VI equation]
{$\boldsymbol q$-middle convolution and Weyl group symmetry \\ of $\boldsymbol q$-Painlev\'e VI equation} \label{sec:qmcWeyl}

We investigate the transformation of the parameters induced by the $q$-middle convolution in terms of the Weyl group symmetry associated with the $q$-Painlev\'e VI equation.
Kajiwara, Noumi and Yamada gave a survey on discrete Painlev\'e equations in \cite{KNY}.
They presented a list of the Weyl group symmetry and a Lax pair for each discrete Painlev\'e equation.
The $q$-Pain\-lev\'e~VI equation by Jimbo and Sakai \cite{JS} corresponds to the equation $q$-$P\big(D^{(1)}_5\big)$ in~\cite{KNY}.
In this section, we make a correspondence between the parameters of the $q$-Painlev\'e VI equation and those of the equation $q$-$P\big(D^{(1)}_5\big)$.

The equation $q$-$P\big(D^{(1)}_5\big)$ was obtained by the compatibility condition of the Lax pair $L_1$ and~$L_2$ in~\cite{KNY}.
The operator $L_1$ is defined by
\begin{align}
L_{1} y(x) &= \bigg\{\frac{x(g\nu_{1}-1)(g\nu_{2}-1)}{qg}-\frac{\nu_{1}\nu_{2}\nu_{3}\nu_{4}(g- \nu_{5}/\kappa _{2})(g- \nu_{6}/\kappa _{2})}{fg}\bigg\} y(x) \nonumber
\\
&\hphantom{=} + \frac{\nu_{1}\nu_{2}(x- q \nu_{3})(x - q \nu_{4})}{q(qf-x)}( g y(x) - y ( x/q ) ) \nonumber
\\
&\hphantom{=} + \frac{(x- \kappa _{1} /\nu_{7})(x - \kappa _{1}/\nu_{8})}{q(f-x)}\bigg( \dfrac{1}{g}y(x) - y(qx) \bigg),\label{eq:KNYlaxpair}
\end{align}
which is independent from the time evolution.
Here the parameters are constrained by the relation{\samepage
\begin{gather}
\kappa _{1}^{2} \kappa _{2}^{2} = q \nu_1 \nu_2 \nu_3 \nu_4 \nu_5 \nu_6 \nu_7 \nu_8. \label{eq:relation2}
\end{gather}
In this paper, we do not use the operator $L_2$, which contains the operation of the time evolution.}

The correspondence between the parameters of the $q$-Painlev\'e VI equation and those of the equation $q$-$P\big(D^{(1)}_5\big)$ was made by considering the linear $q$-difference equation $Y(qx)=A(x) Y(x)$ in equation~(\ref{eq:JSlinDEqx1}) and $L_{1}y(x) =0$ in equation~(\ref{eq:KNYlaxpair}).

For the system of $q$-difference equation
\begin{gather*}
Y(qx) = A(x )Y(x), \qquad
A(x) = \begin{pmatrix}
a_{11}(x) & a_{12}(x) \\ a_{21}(x) & a_{22}(x)
\end{pmatrix}\!, \qquad
Y(x) = \begin{pmatrix}
y_{1}(x) \\ y_{2}(x)
\end{pmatrix}\!,
\end{gather*}
we calculate the $q$-difference equation for $y_{1}(x)$ by eliminating $y_2 $.
The system of the $q$-difference equation is written as
\begin{gather*}
y_{1}(qx) = a_{11}(x)y_{1}(x) + a_{12}(x)y_{2}(x),
\\
y_{2}(qx) = a_{21}(x)y_{1}(x) + a_{22}(x)y_{2}(x).
\end{gather*}
We substitute $y_{2}(x)\! = a_{21}(x/q)y_{1}(x/q) + a_{22}(x/q)y_{2}(x/q)$ into $y_{1}(qx)\! = a_{11}(x)y_{1}(x) + a_{12}(x)y_{2}(x)$.
Then we have
\begin{gather*}
y_{1}(qx) = a_{11}(x)y_{1}(x) + a_{12}(x)a_{21} (x/q) y_{1} (x/q) + a_{12}(x)a_{22}(x/q)y_{2}(x/q).
\end{gather*}
We elimilate $y_{2}(x/q)$ by the relation $y_2 (x/q) = \{y_1(x)-a_{11}(x/q) y_1(x/q)\} /a_{12}(x/q) $.
Thus,
\begin{gather*}
\frac{y_{1}(qx)}{a_{12}(x)} - \bigg\{\frac{a_{11}(x)}{a_{12}(x)} + \frac{a_{22}(x/q)}{a_{12}(x/q)}\bigg\}y_{1}(x)
+ \frac{a_{11}(x/q) a_{22}(x/q)- a_{12}(x/q) a_{21}(x/q)}{a_{12}(x/q)}y_{1}(x/q) = 0.
\end{gather*}
We restrict it to the case that the matrix elements are fixed to equation~(\ref{eq:JSlaxpair}).
Then it follows from equations~(\ref{eq:Ax012}) and (\ref{eq:JSlaxpair}) that
\begin{gather}
\frac{y_{1}(qx)}{x - y} -
\bigg\{\frac{a_{11}(x)}{x - y} + \frac{a_{22} (x/q)}{x/q - y}\bigg\}y_{1}(x) \nonumber
\\ \qquad
{}+ \frac{\chi_{1}\chi_{2}(x/q - ta_{1})(x/q - ta_{2})(x/q - a_{3})(x/q - a_{4})}{x/q - y}y_{1} (x/q) = 0. \label{eq:y1qxxxq}
\end{gather}
Let $\phi (x)$ be the function such that $\phi (qx) = d_0 (x- t a_1 ) (x -ta_2) \phi (x) $.
Set $u (x)= y_1 (x) / \phi (x) $.
Then it follows from equation~(\ref{eq:y1qxxxq}) that the function $u(x)$ satisfies
\begin{gather*}
\frac{(x - ta_{1})(x - ta_{2})}{x - y} d_0 u(qx) - \bigg\{\frac{a_{11}(x)}{x - y} + \frac{a_{22}(x/q )}{x/q - y}\bigg\} u(x)
\\ \qquad
{} + \frac{\chi_{1}\chi_{2}(x/q - a_{3})(x/q - a_{4})}{x/q - y} \frac{1}{d_0} u (x/q) = 0.
\end{gather*}
We rewrite it by using equation~(\ref{eq:JSlaxpair}) as
\begin{gather}
\frac{(x - ta_{1})(x - ta_{2})}{x - y}\bigg\{ d_0 u (qx) - \frac{u (x)}{qz}\bigg\}
 + \frac{\chi_{1}\chi_{2}(x/q - a_{3})(x/q - a_{4})}{x/q - y} \bigg\{ \frac{1}{d_0} u (x/q) - qz u (x)\bigg\} \nonumber
 \\[1ex] \qquad
 {}- \bigg\{\chi_{1}(x - \alpha) + \frac{\chi_{1}z_{1}}{x - y} + \chi_{2} (x/q - \beta ) + \frac{\chi_{2}z_{2}}{x/q - y} - \frac{(x - ta_{1})(x - ta_{2})}{qz(x - y)} \nonumber
 \\[3pt] \qquad \hphantom{- \bigg\{}
 {}- \frac{qz\chi_{1}\chi_{2} (x/q - a_{3})(x/q - a_{4})}{x/q - y} \bigg\} u (x) = 0. \label{eq:3}
\end{gather}
It is seen that the poles $x=y$ and $x=qy$ are cancelled on the coefficient of $u (x)$.
By a~straight\-forward calculation (see \cite{Tkg} for details), equation~(\ref{eq:3}) is written as
{\samepage\begin{gather}
\bigg\{ \frac{x(q z \chi_{1} - 1)(z \chi_{2} - 1)}{q^{2}z} - \chi_{1}\chi_{2}a_{3}a_{4}\frac{(qz - ta_{1}a_{2}/\theta_{1}) (qz - ta_{1}a_{2}/\theta_{2})}{q^2 y z}\bigg\} u (x) \nonumber
\\ \qquad
{}+ \frac{\chi_{1}\chi_{2}(a_3 - x/q ) (a_4 - x/q)}{q y - x} \bigg\{qz u (x) - \frac{1}{d_0} u (x/q) \bigg\} \nonumber
\\ \qquad
{}+ \frac{(x - ta_{1})(x - ta_{2})}{q(y -x)} \bigg\{ \frac{u (x)}{qz} - d_0 u (qx) \bigg\} = 0. \label{eq:y1second}
\end{gather}}
We compare equation~(\ref{eq:y1second}) for the case $d_0 =1$ with equation~(\ref{eq:KNYlaxpair}).
Then we obtain the following correspondence:
\begin{gather}
qz = g, \qquad y = f, \qquad \chi_{1} = \nu_{1}, \qquad \chi_{2} = q\nu_{2}, \qquad \frac{ta_{1}a_{2}}{\theta_{1}} = \frac{\nu_{5}}{\kappa _{2}}, \qquad \frac{ta_{1}a_{2}}{\theta_{2}} = \frac{\nu_{6}}{\kappa _{2}}, \nonumber
\\
a_{3} = \nu_{3}, \qquad a_{4} = \nu_{4}, \qquad ta_{1} = \frac{\kappa _{1}}{\nu_{7}}, \qquad ta_{2} = \frac{\kappa _{1}}{\nu_{8}}.\label{eq:JSKNYcorrespondence}
\end{gather}
On the relation of the parameters, the condition $\chi_{1}\chi_{2}a_{1}a_{2}a_{3}a_{4} = \theta_{1} \theta_{2} $ is equivalent to equation~(\ref{eq:relation2}).
Thus, we obtained a correspondence between the parameters of the $q$-Painlev\'e VI equation in \cite{JS} and those of the equation $q$-$P\big(D^{(1)}_5\big)$ in \cite{KNY}.

Sakai \cite{Sak} established that each discrete Painlev\'e equation has the symmetry in terms of the affine Weyl group, and the description of the symmetry was reviewed explicitly by Kajiwara, Noumi and Yamada in \cite{KNY}.
The $q$-Painlev\'e VI equation has the symmetry of the affine Weyl group of the type $D^{(1)}_5$.
We describe the action of the operators $s_0, \dots,s_5$ for the parameters $(\kappa _{1}, \kappa _{2}, \nu_{1}, \dots, \nu_{8}) \in (\mathbb{C}^{\times})^{10}$ and $(f, g) \in \mathbb{P}^{1} \times \mathbb{P}^{1}$ as follows
\begin{gather}
s_0 \colon\ \nu_7 \leftrightarrow \nu_8, \qquad
s_1 \colon\ \nu_3 \leftrightarrow \nu_4, \qquad
s_4 \colon\ \nu_1 \leftrightarrow \nu_2, \qquad
s_5 \colon\ \nu_5 \leftrightarrow \nu_6, \nonumber
\\
 s_2 \colon\ \nu_3 \rightarrow\frac{k_1}{\nu_7},\qquad
 \nu_7 \rightarrow \frac{k_1}{\nu_3},\qquad
 k_2 \rightarrow \frac{k_1k_2}{\nu_3\nu_7},\qquad
 g \rightarrow g\, \frac{f - \nu_3}{f - k_1 /\nu_7}, \nonumber
 \\
 s_3 \colon\ \nu_1 \rightarrow\frac{k_2}{\nu_5},\qquad
 \nu_5 \rightarrow \frac{k_2}{\nu_1}, \qquad
 k_1 \rightarrow \frac{k_1k_2}{\nu_1\nu_5},\qquad
 f \rightarrow f\, \frac{g - 1/\nu_1}{g - \nu_5/k_2}.\label{eq:AffineWeylAction}
\end{gather}
The omitted variables are invariant by the action, i.e.,~$s_2 (f) =f$.
Then we can confirm that these operations satisfy the relations of the Weyl group $W\big(D^{(1)}_{5}\big)$ whose Dynkin diagram is as follows

\hspace*{30mm}\begin{picture}(0,85)(0,10)
\put(70,80){\circle{10}}
\put(70,20){\circle{10}}
\put(190,80){\circle{10}}
\put(190,20){\circle{10}}
\put(110,50){\circle{10}}
\put(150,50){\circle{10}}
\qbezier(74,77)(90,65)(106,53)
\qbezier(74,23)(90,35)(106,47)
\qbezier(186,77)(170,65)(154,53)
\qbezier(186,23)(170,35)(154,47)
\qbezier(115,50)(130,50)(145,50)
\put(78,75){$0$}
\put(78,15){$1$}
\put(175,75){$4$}
\put(175,15){$5$}
\put(105,35){$2$}
\put(145,35){$3$}
\end{picture}

On the other hand, the $q$-middle convolution induces the transformation of the parameters of the $q$-Painlev\'e VI equation given in equation~(\ref{eq:mctoparameter}), although there was an arbitrary parameter $\widetilde{c}$.
We describe it in terms of the Weyl group action by specializing the parameter $\widetilde{c}$ in equation~(\ref{eq:Atilde}).

\begin{Proposition}
We specify the parameter $\widetilde{c}$ in equation~\eqref{eq:Atilde} by setting $\widetilde{c} =\chi _2$.
Then the transformation of the parameters of the $q$-Painlev\'e VI equation which is induced by the $q$-middle convolution coincides with the action
\begin{gather}
s_{5}s_{2}s_{1}s_{0}s_{2}s_{3}s_{2}s_{0}s_{1}s_{2}
\label{eq:s5210232012}
\end{gather}
by the generators of $W\big(D^{(1)}_{5}\big) $, and it is written as
\begin{gather}
 \nu_{1} \rightarrow q\, \frac{\nu_{1}\nu_{2}\nu_{5}}{\kappa _{2}}, \qquad
 \nu_{2} \rightarrow \nu_{2}, \qquad
 \frac{\kappa _{1}}{\nu_{7}} \rightarrow \frac{\kappa _{1}}{\nu_{7}}, \qquad
 \frac{\kappa _{1}}{\nu_{8}} \rightarrow \frac{\kappa _{1}}{\nu_{8}}, \nonumber
 \\
 \nu_{3} \rightarrow q\, \frac{\nu_{2}\nu_{3}\nu_{5}}{\kappa _{2}}, \qquad
 \nu_{4} \rightarrow q\, \frac{\nu_{2}\nu_{4}\nu_{5}}{\kappa _{2}}, \qquad
 \frac{\kappa _{2}}{\nu_{5}} \rightarrow q\, \frac{\nu_{2}\nu_{5}}{\nu_{6}}, \qquad
 \frac{\kappa _{2}}{\nu_{6}} \rightarrow q^{2}\, \frac{\nu_{2}^{2}\nu_{5}}{\kappa _{2}}, \nonumber
 \\
 f \rightarrow \widetilde{f} = \frac{ \dfrac{(f - \nu_{3})(f -\nu_{4})}{(f - \kappa _{1}/\nu_{7}) (f - \kappa _{1}/\nu_{8})}g - \dfrac{1}{\nu_{1}}}{ \dfrac{(f - \nu_{3})(f -\nu_{4})}{(f - \kappa _{1}/\nu_{7} ) (f - \kappa _{1}/\nu_{8} )}g - \dfrac{\kappa _{2}}{q\nu_{1}\nu_{2}\nu_{5}}}\, f, \nonumber
 \\
 g \rightarrow \widetilde{g} = \frac{(f - \nu_{3})(f - \nu_{4})}{(f - \kappa _{1}/\nu_{7} ) (f - \kappa _{1}/\nu_{8} )}\frac{(\widetilde{f} - \kappa _{1}/\nu_{7} ) (\widetilde{f} - \kappa _{1}/\nu_{8} )}{(\widetilde{f} - q \nu_{2}\nu_{3}\nu_{5}/\kappa _{2} ) (\widetilde{f} - q \nu_{2}\nu_{4}\nu_{5} /\kappa _{2} )}\, g.\label{eq:qmcKNY}
\end{gather}
\end{Proposition}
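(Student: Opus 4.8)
The plan is to verify the Proposition in two independent steps: first to derive the explicit transformation \eqref{eq:qmcKNY} directly from the $q$-middle convolution data, and second to check that the composite element \eqref{eq:s5210232012} of $W\big(D^{(1)}_5\big)$ induces exactly the same transformation.

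For the first step I would substitute the dictionary \eqref{eq:JSKNYcorrespondence} into the middle-convolution formulas \eqref{eq:mctoparameter}, \eqref{ytilde} and \eqref{eq:ztilde} after setting $\widetilde{c}=\chi_2$. Using $\chi_1=\nu_1$, $\chi_2=q\nu_2$, $ta_1a_2/\theta_1=\nu_5/\kappa_2$, $a_3=\nu_3$, $a_4=\nu_4$, $ta_1=\kappa_1/\nu_7$, $ta_2=\kappa_1/\nu_8$, the image $\chi_1\chi_2 ta_1a_2/\theta_1$ of $\nu_1$ becomes $q\nu_1\nu_2\nu_5/\kappa_2$, and the images of $a_3,a_4$ become $q\nu_2\nu_3\nu_5/\kappa_2$ and $q\nu_2\nu_4\nu_5/\kappa_2$ immediately; the only subtlety is to identify the two images of $\{t\theta_1,t\theta_2\}$ in \eqref{eq:mctoparameter} with $\nu_5,\nu_6$, which fixes $\kappa_2/\nu_5\to q\nu_2\nu_5/\nu_6$ and $\kappa_2/\nu_6\to q^2\nu_2^2\nu_5/\kappa_2$. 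Substituting the same dictionary together with $f=y$, $g=qz$ into \eqref{ytilde} and \eqref{eq:ztilde} then reproduces $\widetilde f$ and $\widetilde g$ verbatim. This step is a direct computation with no real obstacle.

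For the second step I would compute the action of \eqref{eq:s5210232012} generator by generator, reading the word so that the leftmost factor $s_5$ acts first (the opposite ordering does not reproduce \eqref{eq:qmcKNY}, giving instead the $\nu_5\leftrightarrow\nu_6$-exchanged transformation). I would separate the multiplicative action on $(\kappa_1,\kappa_2,\nu_1,\dots,\nu_8)$ from the birational action on $(f,g)$. For the multiplicative parameters each of $s_0,s_1,s_5$ is a transposition and each of $s_2,s_3$ rescales one of $\kappa_1,\kappa_2$, so one tracks the evolving values through the ten steps and repeatedly invokes the constraint \eqref{eq:relation2}, in the form $\kappa_1^2=q\nu_1\cdots\nu_8/\kappa_2^2$, to reduce the accumulated monomials to the clean forms in \eqref{eq:qmcKNY}. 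For the $(f,g)$-part, only the four occurrences of $s_2$ move $g$ while the single $s_3$ moves $f$: the two $s_2$'s preceding $s_3$ combine to multiply $g$ by $(f-\nu_3)(f-\nu_4)/\{(f-\kappa_1/\nu_7)(f-\kappa_1/\nu_8)\}$, the $s_3$ step builds $\widetilde f$ out of this, and the two trailing $s_2$'s multiply the result by the remaining factors appearing in $\widetilde g$.

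The main obstacle is the bookkeeping in this second step: the quantities feeding the birational formulas for $s_2$ and $s_3$ are not the original parameters but their images under the preceding generators, and these must be simplified with \eqref{eq:relation2} at every stage before they can be recognized; for instance the current values $\nu_3,\kappa_1/\nu_7$ entering the two trailing $s_2$'s must be reduced to $q\nu_2\nu_3\nu_5/\kappa_2$ and $q\nu_2\nu_4\nu_5/\kappa_2$. In particular the initial $s_5$ is essential, since it supplies $\nu_6$ rather than $\nu_5$ as the current value of $\nu_5$ entering $s_3$, and this is exactly what, after cancellation against the $\nu_6$ hidden in $\kappa_1^2$, makes the denominator of $\widetilde f$ collapse to $\kappa_2/(q\nu_1\nu_2\nu_5)$. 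Once the ordering convention is pinned down and the intermediate monomials are consistently reduced, the two computations agree and the Proposition follows.
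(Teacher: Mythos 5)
Your proposal is correct and is essentially the paper's own argument: the paper likewise first derives \eqref{eq:qmcKNY} by substituting the dictionary \eqref{eq:JSKNYcorrespondence} into \eqref{eq:mctoparameter}, \eqref{ytilde} and \eqref{eq:ztilde} with $\widetilde{c}=\chi_2$, and then verifies the word \eqref{eq:s5210232012} by direct computation with repeated use of \eqref{eq:relation2}. The only difference is presentational: your ``leftmost factor acts first on parameter values'' reading is the anti-isomorphic, hence equivalent, description of the paper's symbolic composition of algebra automorphisms (rightmost generator innermost), and it correctly selects the reading that yields \eqref{eq:qmcKNY}; the paper merely organizes the same bookkeeping in blocks, using $s_2s_1s_0s_2=s_2s_0s_1s_2$ and $s=s_5s_2s_0s_1s_2s_3$, rather than generator by generator.
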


\begin{proof}
Set $\widetilde{c} =\chi _2 $.
Then it follows from equations~(\ref{eq:mctoparameter}), (\ref{ytilde}) and (\ref{eq:ztilde}) that we may write the transformation of the parameters induced by the $q$-middle convolution as
\begin{gather}
 \chi_{1}\rightarrow \frac{\chi_{1} \chi_{2} t a_{1}a_{2}}{\theta_{1}}, \qquad
 \chi_{2} \rightarrow \chi _2, \qquad
 a_{1} \rightarrow a_{1},\qquad
 a_{2} \rightarrow a_{2 }, \nonumber
 \\
 a_{3} \rightarrow \frac{\chi_{2}t a_{1}a_{2}a_{3}}{\theta_{1}}, \qquad
 a_{4 } \rightarrow \frac{\chi_{2} t a_{1}a_{2}a_{4}}{\theta_{1}}, \qquad
 t\theta_{1} \rightarrow \frac{\chi_{2} t^{2} a_{1}a_{2}\theta_{2}}{\theta_{1}}, \qquad
 t\theta_{2} \rightarrow \frac{\chi_{2}^2 t^{3} a_{1}^{2}a_{2}^{2}}{\theta_{1}},\nonumber
 \\
 y \rightarrow \widetilde{y} = \frac{ qz\dfrac{(y-a_{3})(y-a_{4})}{(y-ta_{1})(y-ta_{2})} - \dfrac{1}{\chi_{1}}}{ qz\dfrac{(y-a_{3})(y-a_{4})}{(y-ta_{1})(y-ta_{2})} - \dfrac{\theta_{1}}{\chi_{1}\chi_{2} t a_{1}a_{2}}}\, y, \nonumber
 \\
 z \rightarrow \widetilde{z} = \frac{(y-a_{3})(y-a_{4})}{(y-ta_{1})(y-ta_{2})}\frac{(\widetilde{y} - ta_{1})(\widetilde{y} - ta_{2})}{( \widetilde{y} - \chi_{2} t a_{1}a_{2}a_{3}/\theta_{1}) (\widetilde{y} - \chi_{2} t a_{1}a_{2}a_{3}/\theta_{1} )} z.\label{eq:mctoparameterk2}
\end{gather}
By the correspondence in equation~(\ref{eq:JSKNYcorrespondence}), it is rewritten as equation~(\ref{eq:qmcKNY}).

We show that the transformation of the parameters given in equation~(\ref{eq:qmcKNY}) coincides with the consequence of the action given in equation~(\ref{eq:s5210232012}).
We apply the operation $s_{2}s_{0}s_{1}s_{2}$ to $f$ and~$g$.
Then
\begin{gather*}
 s_{2}s_{0}s_{1}s_{2}(f) = f, \qquad
 s_{2}s_{0}s_{1}s_{2}(g) = \frac{(f - \nu_{3} )( f - \nu_{4} )}{(f - \kappa _{1}/\nu_{7}) (f - \kappa _{1}/\nu_{8})}\, g.
\end{gather*}
Note that we define the composition of the transformations as automorphisms of the algebra (symbolical composition in \cite[Remark 2.1]{KNY}).
Since $s_3 (f) = f (g - 1/\nu_1 )/( g - \nu_5/k_2)$, we have
\begin{gather*}
s_{2}s_{0}s_{1}s_{2}s_{3}(f) =
\frac{ \dfrac{(f - \nu_{3})(f -\nu_{4})}{(f - \kappa _{1}/\nu_{7}) (f - \kappa _{1}/\nu_{8})}g - \dfrac{1}{\nu_{1}}}{\dfrac{(f - \nu_{3})(f -\nu_{4})}{(f - \kappa _{1}/\nu_{7} ) (f - \kappa _{1}/\nu_{8} )}g - \dfrac{\kappa _{2}}{q\nu_{1}\nu_{2}\nu_{6}}}\, f.
\end{gather*}
Here we used equation~(\ref{eq:relation2}).
By comparing it with equation~(\ref{eq:qmcKNY}), we obtain
\begin{gather*}
s_{5}s_{2}s_{0}s_{1}s_{2}s_{3}(f) = \widetilde{f}.
\end{gather*}
We consider the operation to $g$.
Since $g$ is invariant under the actions of $s_3$ and $s_5$, we have
\begin{gather*}
s_{5}s_{2}s_{0}s_{1}s_{2}s_{3}(g) = s_{2}s_{0}s_{1}s_{2}(g) = \frac{(f - \nu_{3} )( f - \nu_{4} )}{(f - \kappa _{1}/\nu_{7}) (f - \kappa _{1}/\nu_{8})}\, g.
\end{gather*}
We set $s = s_{5}s_{2}s_{0}s_{1}s_{2}s_{3}$.
Then
\begin{align*}
s_{5}s_{2}s_{1}s_{0}s_{2}s_{3}s_{2}s_{0}s_{1}s_{2}(g) &=
\frac{s(f) - s(\nu_{3})}{s(f) - s(\kappa _{1}/\nu_{7})}
\frac{s(f) - s(\nu_{4})}{s(f) - s(\kappa _{1}/\nu_{8})}\, s(g)
\\
& = \frac{\widetilde{f} - s(\nu_{3})}{\widetilde{f} - s \kappa _{1}/\nu_{7})}
\frac{\widetilde{f} - s(\nu_{4})}{\widetilde{f} - s(\kappa _{1}/ \nu_{8})}
\frac{f - \nu_{3}}{f - \kappa _{1}/\nu_{7}}\frac{f - \nu_{4}}{f - \kappa _{1}/\nu_{8} }\, g.
\end{align*}
It follows from equation~(\ref{eq:relation2}) that
\begin{gather*}
s\bigg(\frac{\kappa _{1}}{\nu_{7}}\bigg) = \frac{q\nu_{2}\nu_{4}\nu_{5}}{\kappa _{2}}, \qquad
s\bigg(\frac{\kappa _{1}}{\nu_{8}}\bigg) = \frac{q\nu_{2}\nu_{3}\nu_{5}}{\kappa _{2}}, \qquad
s(\nu_{3}) = \frac{\kappa _{1}}{\nu_{8}}, \qquad
s(\nu_{4}) = \frac{\kappa _{1}}{\nu_{7}}.
\end{gather*}
Hence
\begin{gather*}
s_{5}s_{2}s_{1}s_{0}s_{2}s_{3}s_{2}s_{0}s_{1}s_{2}(g) = \widetilde{g},
\\
s_{5}s_{2}s_{1}s_{0}s_{2}s_{3}s_{2}s_{0}s_{1}s_{2}(f) = s_{5}s_{2}s_{0}s_{1}s_{2}s_{3}(f) = \widetilde{f}.
\end{gather*}
The action of the operation $s_{5}s_{2}s_{1}s_{0}s_{2}s_{3}s_{2}s_{0}s_{1}s_{2}$ to the other parameters is given by
\begin{gather*}
\kappa _{1} \rightarrow \frac{q\kappa _{1}\nu_{2}\nu_{5}}{\kappa _{2}}, \qquad
 \kappa _{2} \rightarrow \frac{q^{2}\nu_{2}^{2}\nu_{5}^{2}}{\kappa _{2}}, \qquad
 \\
\nu_{1} \rightarrow \frac{q\nu_{1}\nu_{2}\nu_{5}}{\kappa _{2}}, \qquad
 \nu_{2} \rightarrow \nu_{2}, \qquad
 \nu_{3} \rightarrow \frac{q\nu_{2}\nu_{3}\nu_{5}}{\kappa _{2}}, \qquad
 \nu_{4} \rightarrow \frac{q\nu_{2}\nu_{4}\nu_{5}}{\kappa _{2}}, \qquad
 \\
\nu_{5} \rightarrow \frac{q\nu_{2}\nu_{5}\nu_{6}}{\kappa _{2}}, \qquad
 \nu_{6} \rightarrow \nu_{5}, \qquad
 \nu_{7} \rightarrow \frac{q\nu_{2}\nu_{5}\nu_{7}}{\kappa _{2}}, \qquad
 \nu_{8} \rightarrow \frac{q\nu_{2}\nu_{5}\nu_{8}}{\kappa _{2}},
\end{gather*}
and it recovers equation~(\ref{eq:qmcKNY}).
\end{proof}

\section[Integral transformation on q-Heun equation]
{Integral transformation on $\boldsymbol q$-Heun equation} \label{sec:ITHeun}

We interpret the integral transformation in Theorem \ref{thm:inttrans} as the one for solutions of the single second-order linear $q$-difference equations.

\begin{Proposition} \label{prop:y1cy1}\qquad
\begin{enumerate}\itemsep=0pt
\item[$(i)$] The function $y_1 (x)$ in equation~\eqref{eq:Yqmcthm} satisfies
\begin{gather}
\bigg\{\frac{x(q \chi_{1} z - 1)(\chi_{2}z - 1)}{z} - \chi_{1}\chi_{2}a_{3}a_{4}\frac{(qz - ta_{1}a_{2}/\theta_{1} )(qz - ta_{1}a_{2}/\theta_{2})}{y z}\bigg\} y_1 (x) \nonumber
\\ \qquad
{}+ \frac{\chi_{1}\chi_{2}(x - q a_3)( x - q a_4)}{qy - x} \bigg\{ qz y_1 (x) - \frac{t a_1 a_2 }{\theta _1} y_1 ( x/q ) \bigg\} \nonumber
\\[1ex] \qquad
{}+ \frac{q(x - ta_{1})(x - ta_{2})}{y -x} \bigg\{ \frac{y_1 (x)}{qz} - \frac{\theta _1}{t a_1 a_2 } y_1 (qx) \bigg\} = 0.\label{eq:y1secondIT}
\end{gather}
\item[$(ii)$] The function $\check{y}_1 (x)$ in equation~\eqref{eq:appqmcthm} satisfies
\begin{gather}
\bigg\{ \frac{x(q t \theta_{2} \widetilde{z} - a_3 a_4)(\chi_{2}\widetilde{z} - 1)}{a_3 a_4 \widetilde{z}} - \frac{t^2 a_1 a_2 (q \chi_{2}\theta _2 \widetilde{z} - \theta _1) \big(q\chi_{2}^2 t a_{1}a_{2} \widetilde{z} - \theta_{1} \big)}{\theta _1^2 \widetilde{y}\widetilde{z}}\bigg\} \check{y}_1 (x) \nonumber
\\ \qquad
{}+ \frac{\chi_{2} t \theta_{2} (x- q t \theta_{2}/(\chi_{1}a_{4}))(x- q t \theta_{2}/(\chi_{1}a_{3}))}{a_3 a_4 (q\widetilde{y} - x)} \bigg\{ q\widetilde{z} \check{y}_1 (x) - \frac{1}{\chi_{2}} \check{y}_1 ( x/q ) \bigg\} \nonumber
\\ \qquad
{}+ \frac{q(x - ta_{1})(x - ta_{2})}{\widetilde{y} -x} \bigg\{ \frac{\check{y}_1 (x)}{q\widetilde{z}} - \chi_{2} \check{y}_1 (qx) \bigg\} = 0,\label{eq:cy1secondIT}
\end{gather}
where $\widetilde{y} $ and $\widetilde{z} $ are determined by equation~\eqref{eq:mctoparameterk2}.
\end{enumerate}
\end{Proposition}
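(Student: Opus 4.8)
The strategy is to reduce both parts to the scalar second-order equation \eqref{eq:y1second}, which has already been derived in the excerpt. Recall that \eqref{eq:y1second} is the equation satisfied by $u(x)=y_1(x)/\phi(x)$, where $y_1$ is the first component of a solution of the Jimbo--Sakai equation \eqref{eq:JSlinDEqx1} and $\phi$ is \emph{any} gauge factor with $\phi(qx)=d_0(x-ta_1)(x-ta_2)\phi(x)$. Two features of \eqref{eq:y1second} are essential: it holds for an arbitrary gauge constant $d_0$, and $d_0$ enters only through the scalars $1/d_0$ and $d_0$ multiplying $u(x/q)$ and $u(qx)$, whereas the first bracket depends solely on the spectral data $\chi_1,\chi_2,a_3,a_4,\theta_1,\theta_2,y,z$. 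In particular $w$ does not appear, so it will be harmless that the image of $w$ under the $q$-middle convolution is never computed.

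For part~(i), I would use that the solution $Y$ of \eqref{eq:Yqmcthm}, i.e.\ of $Y(qx)=B(x)Y(x)$ with $B(x)=A(x)/(c_0(x-ta_1)(x-ta_2))$ and $c_0=\theta_1/(ta_1a_2)$, arises from a solution of \eqref{eq:JSlinDEqx1} through the scalar gauge $\phi_0(x)=x^{\mu}(x/(ta_1);q)_\infty(x/(ta_2);q)_\infty$, which satisfies $\phi_0(qx)=\phi_0(x)/(c_0(x-ta_1)(x-ta_2))$. Hence the first component $y_1$ of \eqref{eq:Yqmcthm} coincides with the function $u$ of \eqref{eq:y1second} exactly when $\phi=1/\phi_0$, that is when $d_0=c_0=\theta_1/(ta_1a_2)$, so that $1/d_0=ta_1a_2/\theta_1$. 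Substituting this value and multiplying \eqref{eq:y1second} through by $q^2$ converts each of its three terms into the corresponding term of \eqref{eq:y1secondIT}; the identity $(a_3-x/q)(a_4-x/q)=q^{-2}(x-qa_3)(x-qa_4)$ accounts for the change of the quadratic factor, and the overall $q^2$ for the remaining powers of $q$. This yields \eqref{eq:y1secondIT}.

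For part~(ii), I would apply the same reduction to the transformed equation. By \eqref{eq:Atilde} with the specialization $\widetilde c=\chi_2$, the equation \eqref{eq:appqmcthm} governing $\overline Y$ is precisely $\overline Y(qx)=\widetilde A(x)\overline Y(x)/(\widetilde c(x-ta_1)(x-ta_2))$, where $\widetilde A$ is a Jimbo--Sakai matrix of the form \eqref{eq:JSlaxpair} whose spectral parameters are the images \eqref{eq:mctoparameterk2} of $(\chi_1,\chi_2,a_1,\dots,a_4,\theta_1,\theta_2,y,z)$ and whose accessory parameters are $\widetilde y,\widetilde z$. Thus $\check y_1$ is a $B$-type first component for this transformed system, with gauge constant $\widetilde c=\chi_2$ now playing the role of $c_0$. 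Applying part~(i) (equivalently, the derivation of \eqref{eq:y1second}) to $\widetilde A$ shows that $\check y_1$ satisfies the version of \eqref{eq:y1secondIT} in which every spectral parameter is replaced by its tilde image and the gauge constant $\theta_1/(ta_1a_2)$ is replaced by $\widetilde c=\chi_2$ (so the last two braces become $\{q\widetilde z\,\check y_1(x)-\chi_2^{-1}\check y_1(x/q)\}$ and $\{(q\widetilde z)^{-1}\check y_1(x)-\chi_2\check y_1(qx)\}$, as in \eqref{eq:cy1secondIT}).

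It then remains to verify that this tilded equation is identical to \eqref{eq:cy1secondIT} after inserting the explicit images from \eqref{eq:mctoparameterk2}, and this verification is the only substantive obstacle. The relation \eqref{eq:relation1}, namely $\chi_1\chi_2a_1a_2a_3a_4=\theta_1\theta_2$, is the recurring identity. For instance $\widetilde\chi_1\widetilde\chi_2(x-q\widetilde a_3)(x-q\widetilde a_4)$ must be rewritten as $\frac{\chi_2 t\theta_2}{a_3a_4}(x-qt\theta_2/(\chi_1a_4))(x-qt\theta_2/(\chi_1a_3))$: matching the two roots is exactly the statement $\chi_1\chi_2a_1a_2a_3a_4=\theta_1\theta_2$, and matching the leading coefficients uses the same relation. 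Likewise the first bracket collapses, via \eqref{eq:relation1}, to the coefficient displayed in \eqref{eq:cy1secondIT}, with $(qt\theta_2\widetilde z-a_3a_4)(\chi_2\widetilde z-1)$ in the $x$-term and $(q\chi_2\theta_2\widetilde z-\theta_1)(q\chi_2^2ta_1a_2\widetilde z-\theta_1)$ in the constant term. I expect this bookkeeping---rather than any conceptual point---to be the main difficulty, but it is routine once the substitutions from \eqref{eq:mctoparameterk2} are organized.
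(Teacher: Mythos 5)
Your proposal is correct and follows essentially the same route as the paper: part~(i) by identifying $y_1$ of equation~\eqref{eq:Yqmcthm} with the gauge-transformed function $u$ so that equation~\eqref{eq:y1second} applies with $d_0=c_0=\theta_1/(ta_1a_2)$, and part~(ii) by recognizing equation~\eqref{eq:appqmcthm} as a Jimbo--Sakai system in the tilded parameters (with gauge constant $\widetilde{c}=\chi_2$, the image of $w$ being irrelevant since the scalar equation does not involve $w$) and then simplifying via $\chi_1\chi_2a_1a_2a_3a_4=\theta_1\theta_2$. Your spot-checks of the root/leading-coefficient matching are exactly the bookkeeping the paper leaves implicit, so the argument is complete in the same sense as the paper's own proof.
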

\begin{proof}
It follows from equation~(\ref{eq:JSlinDEqxB}) that the function $y_1 (x)$ in equation~(\ref{eq:Yqmcthm}) satisfies equation~(\ref{eq:y1second}) with the condition $d_0=c_0 = \theta _1 /(t a_1 a_2 )$.
Then we obtain $(i)$.

Equation~(\ref{eq:appqmcthm}) is obtained as the form of equation~(\ref{eq:Yqmcthm}) by replacing the parameters as $(y,z) \to (\widetilde{y},\widetilde{z} )$ and equation~(\ref{eq:mctoparameter}) up to the ambiguity of the parameter $w$, and the $q$-difference equation for $\check{y}_1 (x) $ does not depend on the parameter $w$.
Hence $(ii)$ follows from equation~(\ref{eq:Atilde}) with the condition $\widetilde{c} = \chi _2$.
\end{proof}

\begin{Theorem} \label{thm:qmcint1}
Assume that $y_1 (x) $ is a solution to equation~\eqref{eq:y1secondIT} and $\lambda $ satisfies $ q^{\lambda} = \chi_{2}a_1a_2t /\theta_1 $.
Then the function
\begin{gather}
\check{y}_{1}(x) =\int^{\xi \infty}_{0}\frac{y_{1}(s)}{s - x}P_{\lambda}(x, s)\, {\rm d}_{q}s
\label{eq:checky1thm}
\end{gather}
formally satisfies equation~\eqref{eq:cy1secondIT}.
\end{Theorem}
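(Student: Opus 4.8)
The plan is to recognize that this statement is essentially the scalar reduction of Theorem~\ref{thm:inttrans} combined with Proposition~\ref{prop:y1cy1}, so that the analytic content has already been established; what remains is to lift a scalar solution to a vector solution, transport it through the integral transformation, and project back onto the first component. The hypothesis $q^{\lambda} = \chi_{2}a_1a_2t/\theta_1$ coincides with the value of $q^{\lambda}$ fixed throughout Section~\ref{sec:JSqMC}, so the transformation attached to the $q$-middle convolution is precisely the one computed there.

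First I would reconstruct a solution of the full system from the given scalar solution. Starting from $y_1(x)$ solving equation~\eqref{eq:y1secondIT} and setting, with $b_{ij}(x)$ the entries of $B(x)$ in equation~\eqref{eq:Bx},
\begin{gather*}
y_2(x) = \frac{y_1(qx) - b_{11}(x)y_1(x)}{b_{12}(x)},
\end{gather*}
the first row of equation~\eqref{eq:Yqmcthm} holds by construction, while the second row is equivalent, after eliminating $y_2$ exactly as in the derivation of equation~\eqref{eq:y1second}, to the scalar equation~\eqref{eq:y1secondIT} with $d_0 = c_0 = \theta_1/(ta_1a_2)$. Since $y_1$ is assumed to solve the latter, the vector $Y(x) = {}^{t}(y_1(x),y_2(x))$ formally solves equation~\eqref{eq:Yqmcthm}; this is just the converse of Proposition~\ref{prop:y1cy1}$(i)$.

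Next I would apply Theorem~\ref{thm:inttrans} to this $Y(x)$. It yields that $\overline{Y}(x) = {}^{t}(\check{y}_1(x),\check{y}_2(x))$, with $\check{y}_1$ given by equation~\eqref{eq:checky1}, formally satisfies the transformed system~\eqref{eq:appqmcthm}. By Proposition~\ref{prop:y1cy1}$(ii)$ the first component of any solution of~\eqref{eq:appqmcthm} satisfies equation~\eqref{eq:cy1secondIT}; hence the $\check{y}_1$ of equation~\eqref{eq:checky1} satisfies~\eqref{eq:cy1secondIT}. Finally, the function in equation~\eqref{eq:checky1thm} differs from that $\check{y}_1$ only by the nonzero constant factor $t(\chi_2 t a_1 a_2 - \theta_1)/\big(qwyz\chi_2(\chi_1 t a_1 a_2 - \theta_1)\big)$, and equation~\eqref{eq:cy1secondIT} is linear and homogeneous, so the bare integral satisfies it as well. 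The $w$-dependence of that factor is harmless because, as noted in the proof of Proposition~\ref{prop:y1cy1}$(ii)$, equation~\eqref{eq:cy1secondIT} does not involve $w$.

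The one place demanding care is the lifting step: it presupposes $b_{12}(x)\not\equiv 0$ so that $y_2$ is well defined, and it relies on the equivalence between the second-order scalar equation and the first-order system holding in the present formal (Jackson-integral) setting, not merely as a symbolic identity. Everything else is bookkeeping: matching $\lambda$, tracking the parameter replacement $(y,z)\to(\widetilde{y},\widetilde{z})$ of equation~\eqref{eq:mctoparameterk2} that underlies Proposition~\ref{prop:y1cy1}$(ii)$, and invoking linearity to discard the scalar prefactor. Since all integrals are treated formally, no new convergence issue arises beyond the blanket assumption already in force for Theorem~\ref{thm:qcint}.
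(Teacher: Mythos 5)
Your proposal is correct and follows essentially the same route as the paper's proof: define $y_2$ from the first row of the system, verify the second row is equivalent to the scalar equation~\eqref{eq:y1secondIT}, transport $Y={}^{t}(y_1,y_2)$ through the integral transformation of Section~\ref{sec:JSqMC}, and conclude via Proposition~\ref{prop:y1cy1}$(ii)$. Your explicit handling of the nonzero constant prefactor relating equation~\eqref{eq:checky1} to equation~\eqref{eq:checky1thm} (discarded by linearity) is a detail the paper leaves implicit, but it is not a different argument.
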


\begin{proof}
Let $b_{ij}(x) $ $(i,j \in \{1,2 \} )$ be the elements of the matrix $B(x)$ in equation~(\ref{eq:Bx}).
The function $y_1 (x) $ is given as a solution to equation~(\ref{eq:y1secondIT}).
Define the function $y_2 (x)$ by $ y_{1}(qx) = b_{11}(x)y_{1}(x) + b_{12}(x)y_{2}(x) $.
Since equation~(\ref{eq:y1secondIT}) is written as
\begin{gather*}
\frac{y_{1}(qx)}{b_{12}(x)} - \bigg\{\frac{b_{11}(x)}{b_{12}(x)} + \frac{b_{22}(x/q)}{b_{12}(x/q)}\bigg\}y_{1}(x) + \frac{b_{11}(x/q) b_{22}(x/q)- b_{12}(x/q) b_{21}(x/q)}{b_{12}(x/q)}y_{1}(x/q) = 0,
\end{gather*}
we obtain the equality $ y_{2}(x) = b_{21}(x/q)y_{1}(x/q) + b_{22}(x/q)y_{2}(x/q) $.
Then the function $Y(x)= {}^{t} (y_1 (x), y_2 (x))$ satisfies equation~(\ref{eq:Yqmcthm}).
Hence it follows from Theorem \ref{thm:qcint} that the function $\check{Y}(x)= {}^{t} (\check{y}_1 (x), \check{y}_2 (x))$ satisfies equation~(\ref{eq:appqmcthm}).
Therefore the function $\check{y}_{1}(x)$ in equation~(\ref{eq:checky1thm}) satisfies equation~(\ref{eq:cy1secondIT}) by Proposition~\ref{prop:y1cy1}$(ii)$.
\end{proof}

\begin{Corollary}
Let $\mu $, $\mu ' $ and $\lambda $ be the constants such that $q^{\mu }= \nu_{5}/\kappa _{2} $, $q^{\mu '}= q \nu_{2}$ and $ q^{\lambda} = q \nu_{2} \nu_{5}/\kappa _{2} $.
Let $y(x)$ be a solution to the equation $L_1 y(x)=0$ which was described in equation~\eqref{eq:KNYlaxpair}.
Then the function
\begin{gather}
\check{y} (x) =
x^{\mu '} \int^{\xi \infty}_{0}\frac{y (s)}{s - x} s ^{\mu } P_{\lambda}(x, s)\, {\rm d}_{q}s
\label{eq:corchecky}
\end{gather}
formally satisfies the equation $\widetilde{L_1} \check{y}(x)=0$, where the operator $\widetilde{L_1}$ is obtained from $L_1$ by replacing the parameters in accordance with the action of $s_{5}s_{2}s_{1}s_{0}s_{2}s_{3}s_{2}s_{0}s_{1}s_{2} $ $($see equation~$(\ref{eq:qmcKNY}))$.
\end{Corollary}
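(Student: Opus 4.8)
The plan is to reduce the Corollary to the scalar integral transform of Theorem~\ref{thm:qmcint1} by sandwiching it between two elementary scalar gauge transformations — one on the input $y(x)$ and one on the output — and then to read off the resulting parameter change from the correspondence \eqref{eq:JSKNYcorrespondence} together with the Proposition identifying the $q$-middle convolution with the word \eqref{eq:s5210232012}. First I recall that, under \eqref{eq:JSKNYcorrespondence}, the equation $L_1 y(x)=0$ of \eqref{eq:KNYlaxpair} is exactly equation \eqref{eq:y1second} specialized to $d_0=1$, as established in Section~\ref{sec:qmcWeyl}. Hence a solution $y(x)$ of $L_1 y(x)=0$ is the same as a solution of \eqref{eq:y1second} with $d_0=1$.

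The key observation is that the constant $d_0$ in \eqref{eq:y1second} can be shifted by a scalar gauge: since $(qx)^{\nu}=q^{\nu}x^{\nu}$ and only the two shifted terms $d_0 u(qx)$ and $d_0^{-1}u(x/q)$ feel the rescaling, multiplying a solution of \eqref{eq:y1second} by $x^{\nu}$ yields a solution of the same equation with $d_0$ replaced by $q^{-\nu}d_0$ and all other parameters unchanged. On the input side I take $q^{\mu}=\nu_5/\kappa_2=ta_1a_2/\theta_1$, so that $y_1(x)=x^{\mu}y(x)$ solves \eqref{eq:y1second} with $d_0=q^{-\mu}=\theta_1/(ta_1a_2)=c_0$, which is equation \eqref{eq:y1secondIT} of Proposition~\ref{prop:y1cy1}$(i)$. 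Because $q^{\lambda}=q\nu_2\nu_5/\kappa_2=\chi_2 a_1a_2 t/\theta_1$ under \eqref{eq:JSKNYcorrespondence}, the hypothesis of Theorem~\ref{thm:qmcint1} is satisfied, and that theorem gives that $\check y_1(x)=\int_0^{\xi\infty}\frac{y_1(s)}{s-x}P_{\lambda}(x,s)\,{\rm d}_{q}s=\int_0^{\xi\infty}\frac{y(s)}{s-x}s^{\mu}P_{\lambda}(x,s)\,{\rm d}_{q}s$ satisfies equation \eqref{eq:cy1secondIT}.

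It then remains to convert $\check y_1(x)$ into a solution of $\widetilde{L_1}\check y(x)=0$. Inspecting \eqref{eq:cy1secondIT}, it is \eqref{eq:y1second} written in the transformed parameters \eqref{eq:mctoparameterk2} with $d_0=\chi_2$; applying the gauge shift once more with $q^{\mu'}=q\nu_2=\chi_2$ shows that $\check y(x)=x^{\mu'}\check y_1(x)$ solves \eqref{eq:y1second} in the transformed parameters with $d_0=1$, which is precisely the function \eqref{eq:corchecky}. By \eqref{eq:JSKNYcorrespondence} this last equation is $L_1$ with the Jimbo--Sakai parameters replaced by their $q$-middle convolution images, and by the Proposition identifying that parameter map (for $\widetilde c=\chi_2$) with the word \eqref{eq:s5210232012} and the explicit formulas \eqref{eq:qmcKNY}, it is exactly $\widetilde{L_1}\check y(x)=0$. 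The main obstacle is the bookkeeping of this last step: one must verify that the two gauge exponents $\mu$ and $\mu'$ effect precisely the shifts $d_0=1\mapsto c_0$ and $d_0=\chi_2\mapsto 1$, that \eqref{eq:y1secondIT} and \eqref{eq:cy1secondIT} are indeed the asserted specializations of \eqref{eq:y1second} up to an overall nonzero scalar, and that \eqref{eq:JSKNYcorrespondence} intertwines the $q$-middle convolution parameter change with the Weyl-group word — none of which is conceptually deep but all of which demand careful tracking of the accumulated scalar factors.
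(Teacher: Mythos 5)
Your proposal is correct and takes essentially the same route as the paper's proof: gauge the input solution by $x^{\mu}$ so that it satisfies equation~\eqref{eq:y1secondIT}, apply Theorem~\ref{thm:qmcint1} to get a solution of equation~\eqref{eq:cy1secondIT}, then gauge the output by $x^{\mu'}$ and identify the resulting equation with $\widetilde{L_1}\check{y}(x)=0$ via the correspondence \eqref{eq:JSKNYcorrespondence} and the Proposition realizing the parameter map as the word \eqref{eq:s5210232012}. Your reformulation of the two gauge steps as shifts of the constant $d_0$ in equation~\eqref{eq:y1second} (namely $1\mapsto c_0$ on the input and $\chi_2\mapsto 1$ on the output) is just a more explicit bookkeeping of what the paper does by displaying the intermediate equation, not a different argument.
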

\begin{proof}
Let $y(x)$ be a solution to the equation $L_1 y(x)=0$ (see equation~(\ref{eq:KNYlaxpair})).
Set $y_1(x)= x^{\mu } y(x)$.
Then function $y_1 (x)$ satisfies
\begin{gather*}
\bigg\{\frac{x(g\nu_{1}-1)(g\nu_{2}-1)}{qg}-\frac{\nu_{1}\nu_{2}\nu_{3}\nu_{4}(g- \nu_{5}/\kappa _{2})(g- \nu_{6}/\kappa _{2})}{fg}\bigg\} y_1(x)
 \\ \qquad
{}+ \frac{\nu_{1}\nu_{2}(x- q \nu_{3})(x - q \nu_{4})}{q(qf-x)}\bigg( g y_1(x) - \frac{\nu_{5}}{\kappa _{2}} y_1 ( x/q ) \bigg)
\\ \qquad
{} + \frac{(x- \kappa _{1} /\nu_{7})(x - \kappa _{1}/\nu_{8})}{q(f-x)}\bigg( \dfrac{1}{g}y_1(x) - \frac{\kappa _{2}}{\nu_{5}} y_1(qx) \bigg) =0,
\end{gather*}
and it is written as equation~(\ref{eq:y1secondIT}) by the correspondence in equation~(\ref{eq:JSKNYcorrespondence}).
It follows from Theorem \ref{thm:qmcint1} that the function
\begin{gather*}
\check{y}_{1}(x) = \int^{\xi \infty}_{0}\frac{y (s)}{s - x} s ^{\mu } P_{\lambda}(x, s)\, {\rm d}_{q}s
\end{gather*}
satisfies equation~(\ref{eq:cy1secondIT}).
We apply the correspondence in equation~(\ref{eq:JSKNYcorrespondence}) for the parameters and set $\check{y} (x) =
x^{\mu '} \check{y}_{1}(x)$.
Then it is seen that the function $\check{y} (x) $ is written as equation~(\ref{eq:corchecky}) and satifies the equation $\widetilde{L_1} \check{y}(x)=0$.
\end{proof}
We specialize the parameters $y$ and $z$ in Theorem \ref{thm:qmcint1} as
\begin{gather}
y=a_3, \qquad
z= \frac{( t a_1 -a_3) (t a_2 -a_3)}{q t (\theta_1+\theta_2) +a_3^2 (q \chi_1 + \chi_2)+ E q \theta_1 a_3 /(t a_1 a_2 )}. \label{eq:yzqHeun}
\end{gather}
Then equation~(\ref{eq:y1secondIT}) is written as
\begin{gather}
 (x-t a_1)(x- t a_2 ) y_1(qx) + \frac{\chi _1 \chi _2 t^2 a_1^2 a_2^2 (x-a_3) (x-q a_4 )}{q \theta _1^2 } y_1(x/q)\nonumber
\\ \qquad
{} - \bigg\{ \frac{t a_1 a_2 (\chi _2+ q \chi _1 )}{q \theta _1 } x^2 + E x + t^2 a_1 a_2 \bigg( 1+ \frac{\theta _2 }{\theta _1} \bigg) \bigg\} y_1 (x)=0,\label{eq:qHeuny1}
\end{gather}
and equation~(\ref{eq:cy1secondIT}) is written as
\begin{gather}
(x-t a_1 ) (x- t a_2 ) \check{y}_1(qx) + \frac{\chi _1 t a_1 a_2 }{q \theta _1 }\bigg( x -\frac{t \theta _2 }{\chi _1 a_4 } \bigg) \bigg( x- \frac{q t \theta _2 }{\chi _1 a_3 } \bigg) \check{y}_1(x/q) \nonumber
\\ \qquad
{} - \bigg\{ \frac{ q \chi _1 t a_1 a_2 + \theta _1}{q \theta _1 } x^2 + E x + t^2 a_1 a_2 \frac{\chi _2 t a_1 a_2 + \theta _2 }{\theta _1 } \bigg\} \check{y}_1 (x)=0.\label{eq:qHeuncy1}
\end{gather}
Note that these equations are the $q$-Heun equation, and the specialization of the parameters is related to the initial value space of $q$-$P\big(D^{(1)}_5\big)$ (see~\cite{STT1}).
By replacing the parameters, we obtain the following theorem on the $q$-Heun equation.

\begin{Theorem}
Assume that $l_1 l_2 l_3 l_4 = h_1 h_2 h_3 q^2$ and $ g( x) $ satisfies the $q$-Heun equation writ\-ten~as
\begin{gather}
\big(x-h_{1}q^{1/2}\big)\big(x-h_{2}q^{1/2}\big)g(x/q)+l_{3}l_{4}\big(x-l_{1}q^{-1/2}\big)\big(x-l_{2}q^{-1/2}\big)g(qx) \nonumber
\\ \qquad
{}-\big\{(l_{3}+l_{4})x^{2}+l_{3}l_{4}Ex+(l_{1}l_{2}l_{3}l_{4}h_{1}h_{2})^{1/2}\big(h_{3}^{1/2}+h_{3}^{-1/2}\big)\big\}g(x) =0.\label{eq:qHeungx}
\end{gather}
Let $\lambda $ be the value satisfying $ q^{\lambda} = q / l_4$.
Then the function
\begin{gather*}
\check{g}(x) = \int^{\xi \infty}_{0}\frac{g(s)}{s - x}P_{\lambda}(x, s)\, {\rm d}_{q}s
\end{gather*}
formally satisfies
\begin{gather}
\big(x-h'_{1}q^{1/2}\big)\big(x-h'_{2}q^{1/2}\big)\check{g}(x/q)+l'_{3}l'_{4}\big(x-l'_{1}q^{-1/2}\big)\big(x-l'_{2}q^{-1/2}\big)\check{g}(qx) \nonumber
 \\ \qquad
{}-\big\{(l'_{3}+l'_{4})x^{2}+l'_{3}l'_{4}Ex+(l'_{1}l'_{2}l'_{3}l'_{4}h'_{1}h'_{2})^{1/2}\big((h'_{3})^{1/2}+(h'_{3})^{-1/2}\big)\big\}\check{g}(x) =0,\label{eq:qHeuncgx}
\end{gather}
where
\begin{gather*}
l_1 '= l_{1},\quad\
l_2 '= l_{2}, \quad \
l_3 '=l_3,\quad\
l_4 '= q,\quad\
h_1 '= q h_{1} / l_4,\quad \
h_2 '= q h_{2} / l_4,\quad\
h_3 '= l_4 /(q h_{3} ).
\end{gather*}
\end{Theorem}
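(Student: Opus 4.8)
The plan is to reduce the statement to Theorem~\ref{thm:qmcint1} by identifying the $q$-Heun equation~\eqref{eq:qHeungx} with the specialized equation~\eqref{eq:qHeuny1}. First I would set up an explicit dictionary between the parameters $(l_1,l_2,l_3,l_4,h_1,h_2,h_3,E)$ and the parameters $(\chi_1,\chi_2,a_1,a_2,a_3,a_4,\theta_1,\theta_2,t,E)$ of~\eqref{eq:qHeuny1}. Writing $g(x)=y_1(x)$ and multiplying~\eqref{eq:qHeuny1} by $q\theta_1^2/(\chi_1\chi_2 t^2 a_1^2 a_2^2)$ so that the coefficient of $y_1(x/q)$ becomes monic, a comparison of the coefficients of $g(x/q)$, $g(qx)$ and $g(x)$ as polynomials in $x$ forces
\begin{gather*}
l_1 = q^{1/2}ta_1,\quad l_2 = q^{1/2}ta_2,\quad l_3 = \frac{\theta_1}{\chi_1 ta_1a_2},\quad l_4 = \frac{q\theta_1}{\chi_2 ta_1a_2},\\
h_1 = q^{-1/2}a_3,\quad h_2 = q^{1/2}a_4,\quad h_3 = \frac{\theta_1}{\theta_2},
\end{gather*}
with the accessory parameter $E$ unchanged. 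The only genuine ambiguity is the splitting $\{l_3,l_4\}$ (the two other splittings are mere relabelings, using the $h_1\leftrightarrow h_2$ symmetry of~\eqref{eq:qHeungx}); it is resolved by the hypothesis $q^\lambda=q/l_4$, which under the dictionary reads $q^\lambda=\chi_2 a_1a_2 t/\theta_1$, exactly the condition appearing in Theorem~\ref{thm:qmcint1}. I would also check that the assumed constraint $l_1l_2l_3l_4=h_1h_2h_3q^2$ is precisely the image of relation~\eqref{eq:relation1} under the dictionary, and that the value of $z$ prescribed in~\eqref{eq:yzqHeun} is what carries the accessory parameter $E$.

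With the dictionary in hand, the proof becomes a direct invocation of the earlier results. Since $g=y_1$ solves~\eqref{eq:qHeungx}, it solves~\eqref{eq:qHeuny1}, which is~\eqref{eq:y1secondIT} evaluated at the special values~\eqref{eq:yzqHeun}; hence $y_1$ satisfies the hypotheses of Theorem~\ref{thm:qmcint1} with $q^\lambda=\chi_2 a_1a_2 t/\theta_1$. The transform $\check g$ in the statement coincides verbatim with the transform $\check y_1$ of~\eqref{eq:checky1thm} (same kernel $P_\lambda(x,s)/(s-x)$, same $\lambda$), so Theorem~\ref{thm:qmcint1} yields that $\check g=\check y_1$ formally satisfies~\eqref{eq:cy1secondIT}, i.e.\ its specialized form~\eqref{eq:qHeuncy1}.

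It then remains to rewrite~\eqref{eq:qHeuncy1} in $q$-Heun form. Multiplying it by $q\theta_1/(\chi_1 ta_1a_2)$ to make the coefficient of $\check y_1(x/q)$ monic and comparing coefficients once more, I would read off the transformed parameters: the quadratic coefficient of $\check y_1(x)$ together with the leading coefficient of the $\check y_1(qx)$ term fixes $\{l_3',l_4'\}=\{l_3,q\}$, and the roots of the $\check y_1(x/q)$ and $\check y_1(qx)$ terms fix $l_1'=l_1$, $l_2'=l_2$ and $h_1',h_2'$; one then verifies these equal the claimed $l_1'=l_1,\ l_2'=l_2,\ l_3'=l_3,\ l_4'=q,\ h_1'=qh_1/l_4,\ h_2'=qh_2/l_4,\ h_3'=l_4/(qh_3)$. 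The main obstacle, as in the first step, is the match of the constant ($x^0$) terms of the $g(x)$ and $\check g(x)$ coefficients: these yield the factor $(l_1l_2l_3l_4h_1h_2)^{1/2}(h_3^{1/2}+h_3^{-1/2})$, and confirming the identity requires careful use of~\eqref{eq:relation1} together with the correct choice within the $h_3\leftrightarrow h_3^{-1}$ symmetry of that factor (which is what makes $h_3'=l_4/(qh_3)$ rather than its reciprocal the natural representative). Everything else is routine polynomial bookkeeping.
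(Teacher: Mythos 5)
Your proposal is correct and follows essentially the same route as the paper's proof: the identical dictionary ($l_1=q^{1/2}ta_1$, $l_2=q^{1/2}ta_2$, $l_3=\theta_1/(\chi_1 ta_1a_2)$, $l_4=q\theta_1/(\chi_2 ta_1a_2)$, $h_1=q^{-1/2}a_3$, $h_2=q^{1/2}a_4$, $h_3=\theta_1/\theta_2$), the same identification of equation~(\ref{eq:qHeungx}) with equation~(\ref{eq:qHeuny1}) via $g=y_1$, the same invocation of Theorem~\ref{thm:qmcint1} with $y$, $z$ specialized as in equation~(\ref{eq:yzqHeun}), and the same rewriting of equation~(\ref{eq:qHeuncy1}) into the primed $q$-Heun form. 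Your additional remarks (the $\{l_3,l_4\}$ splitting being pinned down by $q^{\lambda}=q/l_4$, the constraint being the image of equation~(\ref{eq:relation1}), and the $h_3\leftrightarrow h_3^{-1}$ ambiguity) merely make explicit verifications the paper leaves implicit.
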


\begin{proof}
Let $l_1$, $l_2$, $l_3$, $l_4$, $h_1$, $h_2$ and $h_3$ be the values such that $l_1 l_2 l_3 l_4 = h_1 h_2 h_3 q^2 $.
We evaluate the values $a_1, a_2, a_3 $ and $a_4 $ as $a_3 = h_{1}q^{1/2}$, $a_4 = h_{2}q^{-1/2} $, $t a_1 = l_{1}q^{-1/2}$ and $t a_2 = l_{2}q^{-1/2}$, and fix the ratios $\theta _1 /\chi _1 $ and $\theta _1 /\chi _2 $ by $l_3 = \theta _1 / (\chi _1 t a_1 a_2 ) $ and $l_4 = q \theta _1 /( \chi _2 t a_1 a_2 ) $.
It follows from the relation $\chi_{1}\chi_{2} a_{1} a_{2} a_{3} a_{4} = \theta_{1}\theta_{2} $ in equation~(\ref{eq:relation1}) that $h_3 = l_1 l_2 l_3 l_4 /\big(h_1 h_2 q^2 \big) = \theta _1/ \theta _2$.
Then equation~(\ref{eq:qHeungx}) is written as equation~(\ref{eq:qHeuny1}) by setting $g(x)=y_1(x)$.
We apply Theorem \ref{thm:qmcint1}, where the parameters $y$ and $z$ are specialized as equation~(\ref{eq:yzqHeun}).
Then we obtain equation~(\ref{eq:qHeuncy1}) by the $q$-integral transformation.
By rewriting the parameters $a_1$, $a_2$, $a_3$, $a_4$, $\theta _1 /\chi _1$, $\theta _1 /\chi _2$ and~$\theta _1/ \theta _2$, we obtain equation~(\ref{eq:qHeuncgx}).
\end{proof}

\section{Concluding remarks} \label{sec:CR}
In this paper, we applied the $q$-middle convolution to a linear $q$-difference equation associated with the $q$-Painlev\'e VI equation, and we obtain integral transformations as a consequence.
We~investigated the symmetry by the $q$-middle convolution in terms of the affine Weyl group symmetry of the $q$-Painlev\'e VI equation.
As an application, we obtained an integral transformation on the $q$-Heun equation.
Note that our result is a $q$-analogue of the results in \cite{Fil,TakI,TakMH} on the Painlev\'e VI equation, the middle convolution and Heun's differential equation.

In \cite{SY}, the $q$-convolution was connected to the $q$-integral transformation by the Jackson integral.
On the other hand, the convolution for the system of Fuchsian differential equations was connected to the Euler's integral transformation, and there are several choice of the cycles on the integration by the Pochhammer contour.
Thus, there is a problem to find more cycles on the $q$-integral transformation associated with the $q$-convolution.
Other problems related with the general $q$-middle convolution may be found from our explicit application of the specified $q$-middle convolution.

The $q$-Painlev\'e VI equation was denoted by $q$-$P\big(D^{(1)}_5\big)$ in \cite{KNY}, and the Weyl group symmetry and the Lax pair of discrete Painlev\'e equations including $q$-$P\big(D^{(1)}_5\big)$ were reviewed in~\cite{KNY}.
The equations $q$-$P\big(E^{(1)}_6\big)$ and $q$-$P\big(E^{(1)}_7\big)$ are also $q$-analogue of the Painlev\'e VI equation.
We hope to extend the symmetry of integral transformations to the cases $q$-$P\big(E^{(1)}_6\big)$ and $q$-$P\big(E^{(1)}_7\big)$ by using the $q$-middle convolution, which might be related with the variants of $q$-Heun equation~\cite{TakR,TakqH}.
There is also a problem to connect the degenerated $q$-Painlev\'e equations \big(e.g.,~$q$-$P\big(A^{(1)}_4\big)$\big) with the $q$-middle convolution.
In this direction, the theory of the $q$-middle convolution for the non-Fuchsian $q$-difference equation is anticipated.

\appendix
\section{Middle convolution for other parameters}
In Section~\ref{sec:JSqMC}, we discussed the middle convolution which is related to the $q$-Painlev\'e VI equation.
The space $\mathcal{L}$ was defined in equation~(\ref{eq:KL}) and we imposed the condition $q^{\lambda}= \chi_{2}a_1a_2t / \theta_1 $ in Section~\ref{sec:JSqMC}, which induces $ \dim (\mathcal{L}) =1$.
In the appendix, we discuss the case $q^{\lambda}= \chi_{1}a_1a_2t / \theta_1 $, which also induces $ \dim (\mathcal{L}) =1$.

We continue the argument in the case $q^{\lambda}= \chi_{1}a_1a_2t / \theta_1 $ by replacing the matrix $P$ in equation~(\ref{eq:P}) with
\begin{gather*}
P = \begin{pmatrix}
 0 & 0 & 1 & v_{01} & 0 & 0 \\
 g_{1} & g_{3} & 0 & v_{02} & 0 & 0 \\
 0 & 0 & 1 & 0 & v_{11} & 0 \\
 g_{2} & g_{4} & 0 & 0 & v_{12} & 0 \\
 0 & 0 & 1 & 0 & 0 & v_{21} \\
 0 & 0 & 0 & 0 & 0 & v_{22} \\
 \end{pmatrix}\!,\qquad
\begin{array}{l}
 g_{3} = -q (\chi_{1} a_2 y -\theta_1 ) z+a_2 (y- t a_1),
 \\[.5ex]
 g_{4} = y (a_1-a_2) ( q \chi_{1} z-1),
\end{array}
\\[.5ex]
g_{1} = -\chi_{2} a_2 \big\{ q^2 \chi_{1} \theta_1 (y-a_3) (y-a_4) (\chi_{1} y-\chi_{2} t a_1 ) z^2 +\theta_1 (y-t a_1 )^2 (y- t a_2 )
\\ \hphantom{g_{1} = }
{} -q (y\!- t a_1 ) \big(2 \theta_1 \chi_{1} y^2\! -\theta_1 ( \chi_{2} t a_1\! + \chi_{1} t a_2\! +\chi_{1} a_3\! +\chi_{1} a_4) y\!+t \big(\theta_1 ^2\!+\chi_{1} \chi_{2} a_1 a_2 a_3 a_4\big)\big) z \big\},
 \\[.5ex]
 g_{2} = (a_1-a_2) y \big\{ q^2 \chi_{1} ^2 \chi_{2} \theta_1 (y-a_3) (y-a_4 ) z^2 +\theta_1 \chi_{2} (y- t a_1 ) (y- t a_2 )
\\ \hphantom{g_{2} = }
{}-q \chi_{1} \big(2 \chi_{2} \theta_1 y^2 - \chi_{2} \theta_1 (t a_1 +t a_2 +a_3+a_4) y+t \big(\theta_1 ^2+\chi_{2} ^2 a_1 a_2 a_3 a_4\big)\big) z \big\}.
\end{gather*}
Recall that $v_{01}, \dots, v_{22} $ were defined by equations~(\ref{eq:v01v02v11v12v21v22}) and (\ref{eq:v01v11v21}).
Then $\det P = q^3 w^2 y^3 z^3 \times\chi_{2}(\chi_{1} -\chi_{2} )^2 (a_1-a_2) \theta_1 ^2 ( \chi_{2} t a_1 a_2 -\theta_1 ) v_{22}^2$, and the matrix $P$ is invertible if $\det P \neq 0 $.
Set
\begin{gather*}
	\widetilde{F}_{1} = P^{-1}F_{1}P, \qquad
	\widetilde{F}_{2} = P^{-1}F_{2}P, \qquad
	\widetilde{F}_{\infty} = P^{-1}{F}_{\infty }P,
\end{gather*}
(see equation~(\ref{eq:F1F2Finf})).
Then they admit the following expression:
\begin{gather*}
\widetilde{F}_{1} = \begin{pmatrix}
\overline{F}_{1} & O \\ * & *
\end{pmatrix}\!, \qquad
\widetilde{F}_{2} = \begin{pmatrix}
\overline{F}_{2} & O \\ * & *
\end{pmatrix}\!, \qquad
\widetilde{F}_{\infty} = \begin{pmatrix}
\overline{F}_{\infty} & O \\ * & *
\end{pmatrix}\!,
\end{gather*}
where $\overline{F}_{1}$, $\overline{F}_{2}$, $\overline{F}_{\infty}$ are $2\times 2$ matrices given by
\begin{gather*}
\overline{F}_{\infty} = \begin{pmatrix}
1 & 0 \\ 0 & \chi_2 t a_1 a_2 /\theta_1
\end{pmatrix}\!,\qquad
\overline{F}_{2} = \overline{F}_{1} | _{a_1 \leftrightarrow a_2},
\\
\overline{F}_{1} = \frac{a_1}{ q y z \theta_1^2 (a_1-a_2) ( \chi_2 t a_1 a_2 - \theta_1 )}
\begin{pmatrix}
 -\theta_1 f_1^{[1]} f_2^{[1]} & - a_2 f_2^{[1]}
 \\[.5ex]
 \chi_{2} a_2 \theta_1 f_1^{[1]} f_3^{[1]} & \chi_{2} a_2^2 f_3^{[1]}
 \end{pmatrix}\!,
\end{gather*}
where
\begin{gather*}
 f_1^{[1]} = q \chi_{1} \chi_{2} a_2 (y-a_3) (y-a_4) z-( \chi_{2} a_2 y -\theta_1 ) (y- t a_2 ),
\\
f_2^{[1]} = q (\chi_{1} a_2 y -\theta_1 ) z-a_2 (y- t a_1 ),
\\
f_3^{[1]} = q^2 \chi_{1} \theta_1 (y-a_3) (y-a_4) (\chi_{1} y - \chi_{2} t a_1 ) z^2 +\theta_1 (y-t a_1 )^2 (y- t a_2 )
\\ \hphantom{f_3^{[1]} =}
{}-q (y\!- t a_1 ) \big(2 \chi_{1} \theta_1 y^2\!- \theta_1 ( \chi_{2} t a_1\! + \chi_{1} t a_2\! + \chi_{1} a_3\! + \chi_{1} a_4 ) y\! +t \big( \theta_1 ^2\!+ \chi_{1} \chi_{2} a_1 a_2 a_3 a_4 \big) \big) z.
\end{gather*}
Write
\begin{gather}
\overline{Y} (qx) = \overline{F}(x) \overline{Y} (x), \qquad
\overline{F}(x) = \overline{F}_{\infty} +
\frac{\overline{F}_{1}}{1 - x/(ta_{1})} + \frac{\overline{F}_{2}}{1 - x/(ta_{2})}. \label{eq:appqmc2}
\end{gather}
Note that the upper right entry of $\overline{F}(x) $ is written as
\begin{gather*}
\frac{t a_1 a_2 (( t a_1 a_2 -q \theta_1 z) x + t a_1 a_2 y (q \chi_{1} z-1)) }{q y z \theta_1 ^2 (x-t a_1 ) (x-t a_2 ) ( \chi_{2} t a_1 a_2 -\theta_1 ) }.
\end{gather*}
As discussed in Section~\ref{sec:JSqMC}, equation~(\ref{eq:appqmc2}) is related to an integral transformation.
Let $Y(x)$ be a solution to $Y(qx) = B(x)Y(x)$ in equation~(\ref{eq:JSlinDEqxB}) and write
$Y(x)= \left(\!\begin{smallmatrix}
y_{1}(x) \\ y_{2}(x)
\end{smallmatrix}\!\right)$.
By applying Theorem \ref{thm:qcint}, it is shown that the function $\overline{Y}(x) =
\left(\!\begin{smallmatrix}
\check{y}_{1}(x) \\ \check{y}_{2}(x)
\end{smallmatrix}\!\right)$
defined by
\begin{align*}
\check{y}_{j}(x) &= \int^{\xi \infty}_{0} \bigg\{ \bigg( \frac{p_{j1}}{s} + \frac{p_{j3}}{s - ta_{1}} + \frac{p_{j5}}{s - ta_{2}}\bigg)y_{1}(s)
+ \bigg( \frac{p_{j2}}{s} + \frac{p_{j4}}{s - ta_{1}} + \frac{p_{j6}}{s - ta_{2}} \bigg)y_{2}(s) \bigg\}
\\
& \hphantom{=}
\times P_{\lambda}(x, s)\, {\rm d}_{q}s, \qquad j=1,2,
\end{align*}
formally satisfies equation~(\ref{eq:appqmc2}), where $p_{jk}$ is the $(j,k)$-entry of the matrix $P^{-1}$.

By a straightforward calculation, it is shown that
\begin{gather*}
 \frac{p_{21}}{s} + \frac{p_{23}}{s - ta_{1}} + \frac{p_{25}}{s - ta_{2}} = \frac{-t \theta_{1}^2 (\chi_{1}-\chi_{2})}{c (\chi_{2} t a_1 a_2 -\theta_{1}) s} b_{21}(s),
 \\
 \frac{p_{22}}{s} + \frac{p_{24}}{s - ta_{1}} + \frac{p_{26}}{s - ta_{2}} = \frac{-t \theta_{1}^2 (\chi_{1}-\chi_{2})}{c (\chi_{2} t a_1 a_2 -\theta_{1}) s} (b_{22}(s) -1),
 \\
 \frac{p_{12}}{s} + \frac{p_{14}}{s - ta_{1}} + \frac{p_{16}}{s - ta_{2}} = \frac{\theta_{1} ((t a_1 a_2 - q z\theta_{1}) s + y t a_1 a_2 (q z\chi_{1} -1))}{q c w y z \chi_{2} a_1 a_2 (\chi_{2} t a_1 a_2 -\theta_{1}) (s-y) s} b_{12}(s),
 \\
 \frac{p_{11}}{s} + \frac{p_{13}}{s - ta_{1}} + \frac{p_{15}}{s - ta_{2}} = \frac{\theta_{1} ((t a_1 a_2 - q z\theta_{1}) s + y t a_1 a_2 (q z\chi_{1} -1))}{q c w y z \chi_{2} a_1 a_2 (\chi_{2} t a_1 a_2 -\theta_{1}) (s-y) s} b_{11}(s)
 \\ \hphantom{ \frac{p_{11}}{s} + \frac{p_{13}}{s - ta_{1}} + \frac{p_{15}}{s - ta_{2}} = }
{}- \frac{t (\chi_{1} (t a_1 a_2 -q z \theta_{1} )s +y \theta_{1} (q z \chi_{1} -1))}{q c w y z \chi_{2} (\chi_{2} t a_1 a_2 -\theta_{1}) (s-y) s},
 \\
 c= q^2 \chi_{1}^2 \chi_{2} \theta_{1} (y-a_3) (y-a_4) z^2 + \theta_{1} (y \chi_{2}-\chi_{1} t a_2 ) (y- t a_1 )
 \\ \hphantom{ c=}
{} -q \chi_{1} \big(2 \chi_{2} \theta_{1} y^2 -\theta_{1} (\chi_{2} t a_1 +\chi_{1} t a_2 +\chi_{2} a_3+\chi_{2} a_4) y + t \big(\chi_{1} \chi_{2} a_1 a_2 a_3 a_4+\theta_{1}^2\big)\big) z, \nonumber
\end{gather*}
where $b_{jk}(s) $ are elements of the matrix $B(s)$ in equation~(\ref{eq:Bx}).
It follows from $ y_{1}(qs) = b_{11}(s)y_{1}(s) + b_{12}(s)y_{2}(s) $ that
\begin{align*}
 \check{y}_{1}(x) &= \frac{(t a_1 a_2 - q z\theta_{1})}{q c w y z \chi_{2} a_1 a_2 (\chi_{2} t a_1 a_2 -\theta_{1}) }
 \\
 &\times\!\!\!\!\!\phantom{=}\int^{\xi \infty}_{0} \!\biggl\{\! - (\chi_{1} t a_1 a_2 \!-\!\theta_{1})y_1(s)
\!+\! \theta_{1} \bigg(\! 1\! + \!\frac{y t a_1 a_2 (q z\chi_{1}\! -\!1)}{(t a_1 a_2 \!-\! q z\theta_{1}) s} \bigg) ( y_1 (qs)\! -\!y(s))\! \biggr\} \frac{P_{\lambda}(x, s)}{s-y} \, {\rm d}_{q}s.
\end{align*}
Hence, the integral representation of $ \check{y}_{1}(x)$ in the case $q^{\lambda}= \chi_{1} t a_1a_2 / \theta_1 $ is more complicated than that in the case $q^{\lambda}= \chi_{2} t a_1a_2 / \theta_1 $.
On the other hand, it follows from $ y_{2}(qs) = b_{21}(s)y_{1}(s) + b_{22}(s)y_{2}(s) $ that
\begin{gather*}
\check{y}_{2}(x) = \frac{-t \theta_{1}^2 (\chi_{1}-\chi_{2})}{c (\chi_{2} t a_1 a_2 -\theta_{1}) } \int^{\xi \infty}_{0}\frac{y_{2}(qs) - y_{2}(s)}{s}P_{\lambda}(x, s)\, {\rm d}_{q}s.
\end{gather*}

To give the correspondence of the parameters by the $q$-middle convolution in the form of the equation $Y(qx)= \big\{ A_0 (t) + A_1 (t) x +A_2 x^2 \big\} Y(x) $ in equation~(\ref{eq:Ax012}), we need to transform equation~(\ref{eq:appqmcthm}).

Let $\widetilde{c}$ and $\widetilde{d}$ be a non-zero constant which will be fixed later.
Set $x= \widetilde{d} \widetilde{x}$,
\begin{gather*}
\widetilde{A}(\widetilde{x}) = \widetilde{c} (x- t a_1 ) (x- t a_2 ) \bigg( \overline{F}_{\infty} + \frac{\overline{F}_{1}}{1 - x/(ta_{1})} + \frac{\overline{F}_{2}}{1 - x/(ta_{2})} \bigg)
\end{gather*}
and write $\widetilde{A}(\widetilde{x}) = \widetilde{A}(\widetilde{x}, t) = \widetilde{A}_{0}(t) + \widetilde{A}_{1}(t) \widetilde{x} + \widetilde{A}_{2} \widetilde{x}^{2}$.
Then we have
\begin{gather*}
\widetilde{A}_{2} = \begin{pmatrix}
\widetilde{c} \widetilde{d}^2 \theta_{1} / (t a_{1}a_{2}) & 0 \\
0 & \widetilde{c} \widetilde{d}^2 \chi_{2}
\end{pmatrix}\!,
\\
\widetilde{A}_{0}(t) \mbox{ has the eigenvalues } \widetilde{c} \chi_{1} t^{2} a_{1} a_{2} \mbox{ and } \widetilde{c} t \theta_{2},
\\
\det \widetilde{A}(\widetilde{x}, t) =
\frac{\widetilde{c} ^{2} \widetilde{d}^4 \chi_{2} \theta_{1}}{ t a_{1}a_{2}} \bigg( \widetilde{x} - \frac{ta_{1}}{\widetilde{d}}\bigg) \bigg(\widetilde{x} - \frac{ta_{2}}{\widetilde{d}}\bigg)
\bigg(\widetilde{x} - \frac{\chi_{1} t a_{1}a_{2}a_{3}}{\widetilde{d}\theta_{1}}\bigg) \bigg(\widetilde{x} - \frac{\chi_{1} t a_{1}a_{2}a_{4}}{\widetilde{d}\theta_{1}}\bigg).
\end{gather*}
Hence the action of the $q$-middle convolution to the parameters in the case $q^{\lambda}= \chi_{1} t a_1a_2 / \theta_1 $ is described as
\begin{gather*}
\chi_{1}\rightarrow \frac{\widetilde{c} \widetilde{d}^2 \theta_{1} }{t a_{1}a_{2}}, \qquad
\chi_{2} \rightarrow \widetilde{c} \widetilde{d}^2 \chi_{2}, \qquad
\{a_{1}, a_{2}\} \rightarrow \bigg\{\frac{a_{1}}{\widetilde{d}}, \frac{a_{2}}{\widetilde{d}} \bigg\},
\\
\{a_{3}, a_{4}\} \rightarrow \bigg\{ \frac{\chi_{1} t a_{1}a_{2}a_{3}}{\widetilde{d} \theta_{1}}, \frac{\chi_{1} t a_{1}a_{2}a_{4}}{\widetilde{d} \theta_{1}} \bigg\}, \qquad
\{ t\theta_{1}, t\theta_{2} \} \rightarrow \big\{ \widetilde{c} \chi_{1} t^{2} a_{1} a_{2}, \widetilde{c} t \theta_{2} \big\}. \nonumber
\end{gather*}
We investigate the action to the parameters $y$ and $z$.
We denote the images of $y$ and $z$ by $\widetilde{y}$ and~$\widetilde{z}$.
Let $\widetilde{a}_{11} (\widetilde{x})$ (resp.~$\widetilde{a}_{12} (\widetilde{x})$) be the upper left entry (resp.~the upper right entry) of the matrix $\widetilde{A}(\widetilde{x})$.
Then the value $\widetilde{y}$ is the zero of the linear function $\widetilde{a}_{12} (\widetilde{x})$, and we have
\begin{gather*}
\widetilde{y} =
\frac{t a_1 a_2 (q \chi_{1} z-1)}{ \widetilde{d} (q \theta_1 z - t a_1 a_2 )}\, y
= \frac{\chi_{1} t a_1 a_2 (q z-1/\chi_{1})}{ \widetilde{d} \theta_1(q z - t a_1 a_2 /\theta_1 )} \, y.
\end{gather*}
The value $\widetilde{z}$ satisfies $\widetilde{a}_{11} (\widetilde{x}) | _{\widetilde{x}=\widetilde{y} } = \big(\widetilde{y} - t a_{1}/\widetilde{d}\big)\big(\widetilde{y} -t a_{2}/\widetilde{d}\big) /(q \widetilde{z}) $,
and we obtain
\begin{gather*}
\widetilde{z} = \frac{z }{\widetilde{c} \widetilde{d}^2}.
\end{gather*}
Set $\widetilde{d} =\chi_{1}t a_{1}a_{2} /\theta_{1}$ and $\widetilde{c} \widetilde{d}^2=1$.
By applying the correspondence in equation~(\ref{eq:JSKNYcorrespondence}), the transformation of the parameters is written as
\begin{gather*}
\nu_{1} \rightarrow \frac{\kappa _{2}}{\nu_{5}}, \qquad
\nu_{2} \rightarrow \nu_{2}, \qquad
\bigg\{ \frac{\kappa _{1}}{\nu_{7}}, \frac{\kappa _{1}}{\nu_{8}} \bigg\} \rightarrow \bigg\{ \frac{\kappa _{1}\kappa _{2}}{\nu_{7}\nu_{1}\nu_{5}}, \frac{\kappa _{1}\kappa _{2}}{\nu_{8}\nu_{1}\nu_{5}} \bigg\}, \qquad
\{\nu_{3},\nu_{4} \} \rightarrow \{\nu_{3},\nu_{4}\},
\\
\bigg\{\frac{\kappa _{2}}{\nu_{5}}, \frac{\kappa _{2}}{\nu_{6}} \bigg\} \rightarrow \bigg\{ \nu_{1}, \frac{\kappa _{2}}{\nu_{6}} \bigg\}, \qquad
f \rightarrow \frac{g - 1/\nu_1}{g - \nu_5/\kappa_2} \, f, \qquad
g \rightarrow g. \nonumber
\end{gather*}
Hence we obtain the following proposition.

\begin{Proposition}
We specify the parameters $\widetilde{c}$ and $\widetilde{d} $ by setting $\widetilde{d} =\chi_{1}t a_{1}a_{2} /\theta_{1}$ and $\widetilde{c} \widetilde{d}^2=1$.
Then the transformation of the parameters induced by the $q$-middle convolution in the case $q^{\lambda}= \chi_{1} t a_1a_2 / \theta_1 $ is realized by the action of $s_3$ given in equation~\eqref{eq:AffineWeylAction}.
\end{Proposition}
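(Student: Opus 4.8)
The plan is to read off the transformation of parameters displayed just before the statement---already re-expressed in the variables $(\kappa_1,\kappa_2,\nu_1,\dots,\nu_8,f,g)$ through the correspondence~\eqref{eq:JSKNYcorrespondence}---and to verify that it agrees, arrow by arrow, with the action of the single generator $s_3$ recorded in equation~\eqref{eq:AffineWeylAction}. The substantive work (the passage from the Jimbo--Sakai parameters to the Kajiwara--Noumi--Yamada parameters, and the derivation of $\widetilde y$ and $\widetilde z$ under the specialization $\widetilde d=\chi_1 t a_1 a_2/\theta_1$, $\widetilde c\,\widetilde d^{\,2}=1$) has already been carried out above, so what remains is essentially a direct comparison.

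First I would record the action of $s_3$ on the individual generators: by~\eqref{eq:AffineWeylAction}, $s_3$ sends $\nu_1\mapsto \kappa_2/\nu_5$, $\nu_5\mapsto \kappa_2/\nu_1$, $\kappa_1\mapsto \kappa_1\kappa_2/(\nu_1\nu_5)$ and $f\mapsto f(g-1/\nu_1)/(g-\nu_5/\kappa_2)$, while fixing $\kappa_2$, $\nu_2$, $\nu_3$, $\nu_4$, $\nu_6$, $\nu_7$, $\nu_8$ and $g$. From these I would compute the induced action on exactly those combinations that the correspondence~\eqref{eq:JSKNYcorrespondence} renders meaningful, namely $\nu_1$, $\nu_2$, $\nu_3$, $\nu_4$, $f$, $g$ together with the ratios $\kappa_1/\nu_7$, $\kappa_1/\nu_8$, $\kappa_2/\nu_5$ and $\kappa_2/\nu_6$. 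For example $s_3(\kappa_2/\nu_5)=\kappa_2/(\kappa_2/\nu_1)=\nu_1$, $s_3(\kappa_1/\nu_7)=\kappa_1\kappa_2/(\nu_1\nu_5\nu_7)$, $s_3(\kappa_2/\nu_6)=\kappa_2/\nu_6$, and $\nu_2,\nu_3,\nu_4$ are fixed; each of these reproduces precisely the corresponding arrow in the displayed transformation. The two checks that are not purely formal concern $f$ and $g$: for $g$ I would use $\widetilde z=z/(\widetilde c\,\widetilde d^{\,2})=z$, so that $q\widetilde z=qz=g$ and hence $g\mapsto g=s_3(g)$; for $f$ I would substitute $\widetilde d=\chi_1 t a_1 a_2/\theta_1$ into the formula for $\widetilde y$ displayed above, which collapses to $\widetilde y=\big(qz-1/\chi_1\big)\big(qz-t a_1 a_2/\theta_1\big)^{-1}y$, and then translate via $qz=g$, $\chi_1=\nu_1$, $t a_1 a_2/\theta_1=\nu_5/\kappa_2$ and $y=f$ to obtain $f(g-1/\nu_1)/(g-\nu_5/\kappa_2)=s_3(f)$.

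I expect no serious obstacle. Unlike the main-text proposition, whose word $s_5 s_2 s_1 s_0 s_2 s_3 s_2 s_0 s_1 s_2$ forced repeated use of the constraint~\eqref{eq:relation2} to resolve the ambiguities in the unordered pairs, the present statement involves only the single reflection $s_3$, whose action on the relevant ratios matches the listed arrows without any appeal to~\eqref{eq:relation2}. The one point deserving care is the bookkeeping of the unordered pairs $\{\kappa_1/\nu_7,\kappa_1/\nu_8\}$, $\{\nu_3,\nu_4\}$ and $\{\kappa_2/\nu_5,\kappa_2/\nu_6\}$: I would confirm that $s_3$ fixes $\nu_3,\nu_4,\nu_7,\nu_8$ and $\kappa_2/\nu_6$, so that each pair is carried onto the pair displayed, thereby identifying the $q$-middle convolution in the case $q^{\lambda}=\chi_1 t a_1 a_2/\theta_1$ with the action of $s_3$.
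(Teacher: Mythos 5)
Your proposal is correct and takes essentially the same route as the paper: the paper's own argument is exactly the computation preceding the proposition (specialize $\widetilde{c}\,\widetilde{d}^{\,2}=1$ and $\widetilde{d}=\chi_1 t a_1 a_2/\theta_1$, translate the parameter transformation into the Kajiwara--Noumi--Yamada variables via the correspondence~\eqref{eq:JSKNYcorrespondence}, and compare arrow by arrow with $s_3$), and your two substantive checks --- $\widetilde{z}=z$ giving $g\mapsto g$, and the collapse of $\widetilde{y}$ to $(qz-1/\chi_1)(qz-ta_1a_2/\theta_1)^{-1}y$ giving $f\mapsto f(g-1/\nu_1)/(g-\nu_5/\kappa_2)$ --- are precisely the nontrivial steps the paper relies on, with the remaining arrows being formal consequences of $s_3$ fixing $\kappa_2,\nu_2,\nu_3,\nu_4,\nu_6,\nu_7,\nu_8$.
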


\subsection*{Acknowledgements}

The authors are grateful to the referees for careful reading of the manuscript and valuable comments.
The third author was supported by JSPS KAKENHI Grant Number JP18K03378.

\pdfbookmark[1]{References}{ref}
\LastPageEnding


\begin{thebibliography}{99}
\footnotesize\itemsep=0pt

\bibitem{Aom}
Aomoto K., On the 3 fundamental problems concerning $q$-basic hypergeometric
 functions ($q$-difference equations, asymptotic behaviours and connection
 problem), in Proceedinds of Fifth Oka Symposium (March 18--19, 2006, Nara),
 Oka Mathematical Institute, Japan, 2006, 16~pages (in Japanese), available at
 \url{http://www.nara-wu.ac.jp/omi/oka_symposium/05/aomoto.pdf}.

\bibitem{AT}
Arai Y., Takemura K., On $q$-middle convolution and $q$-hypergeometric
 equations, {i}n preparation.

\bibitem{DR1}
Dettweiler M., Reiter S., An algorithm of {K}atz and its application to the
 inverse {G}alois problem, \href{https://doi.org/10.1006/jsco.2000.0382}{\textit{J.~Symbolic Comput.}} \textbf{30} (2000),
 761--798.

\bibitem{DR2}
Dettweiler M., Reiter S., Middle convolution of {F}uchsian systems and the
 construction of rigid differential systems, \href{https://doi.org/10.1016/j.jalgebra.2007.08.029}{\textit{J.~Algebra}} \textbf{318}
 (2007), 1--24.

\bibitem{Fil}
Filipuk G., On the middle convolution and birational symmetries of the sixth
 {P}ainlev\'e equation, \textit{Kumamoto~J. Math.} \textbf{19} (2006), 15--23.

\bibitem{JS}
Jimbo M., Sakai H., A {$q$}-analog of the sixth {P}ainlev\'e equation,
 \href{https://doi.org/10.1007/BF00398316}{\textit{Lett. Math. Phys.}} \textbf{38} (1996), 145--154,
 \href{https://arxiv.org/abs/chao-dyn/9507010}{arXiv:chao-dyn/9507010}.

\bibitem{KNY}
Kajiwara K., Noumi M., Yamada Y., Geometric aspects of {P}ainlev\'e equations,
 \href{https://doi.org/10.1088/1751-8121/50/7/073001}{\textit{J.~Phys.~A: Math. Theor.}} \textbf{50} (2017), 073001, 164~pages,
 \href{https://arxiv.org/abs/1509.08186}{arXiv:1509.08186}.

\bibitem{Katz}
Katz N.M., Rigid local systems, \textit{Annals of Mathematics Studies}, Vol.~139, \href{https://doi.org/10.1515/9781400882595}{Princeton University Press}, Princeton, NJ, 1996.

\bibitem{Sak}
Sakai H., Rational surfaces associated with affine root systems and geometry of
 the {P}ainlev\'e equations, \href{https://doi.org/10.1007/s002200100446}{\textit{Comm. Math. Phys.}} \textbf{220} (2001),
 165--229.

\bibitem{SY}
Sakai H., Yamaguchi M., Spectral types of linear {$q$}-difference equations and
 {$q$}-analog of middle convolution, \href{https://doi.org/10.1093/imrn/rnw089}{\textit{Int. Math. Res. Not.}}
 \textbf{2017} (2017), 1975--2013, \href{https://arxiv.org/abs/1410.3674}{arXiv:1410.3674}.

\bibitem{STT1}
Sasaki S., Takagi S., Takemura K., $q$-Heun equation and initial-value space of
 $q$-Painlev\'e equation, in Pro\-ceedings of the Conference
 FASnet21, {t}o appear, \href{https://arxiv.org/abs/2110.13860}{arXiv:2110.13860}.

\bibitem{Tkg}
Takagi S., Application of $q$-middle convolution to $q$-sixth Painlev\'e
 equation, {M}aster Thesis, Chuo University, 2021 (in Japanese).

\bibitem{TakI}
Takemura K., Integral representation of solutions to {F}uchsian system and
 {H}eun's equation, \href{https://doi.org/10.1016/j.jmaa.2007.11.015}{\textit{J.~Math. Anal. Appl.}} \textbf{342} (2008), 52--69,
 \href{https://arxiv.org/abs/0705.3358}{arXiv:0705.3358}.

\bibitem{TakMH}
Takemura K., Middle convolution and {H}eun's equation, \href{https://doi.org/10.3842/SIGMA.2009.040}{\textit{SIGMA}}
 \textbf{5} (2009), 040, 22~pages, \href{https://arxiv.org/abs/0810.3112}{arXiv:0810.3112}.

\bibitem{TakR}
Takemura K., Degenerations of {R}uijsenaars--van {D}iejen operator and
 {$q$}-{P}ainlev\'e equations, \href{https://doi.org/10.1093/integr/xyx008}{\textit{J.~Integrable Syst.}} \textbf{2} (2017),
 xyx008, 27~pages, \href{https://arxiv.org/abs/1608.07265}{arXiv:1608.07265}.

\bibitem{TakqH}
Takemura K., On {$q$}-deformations of the {H}eun equation, \href{https://doi.org/10.3842/SIGMA.2018.061}{\textit{SIGMA}}
 \textbf{14} (2018), 061, 16~pages, \href{https://arxiv.org/abs/1712.09564}{arXiv:1712.09564}.

\end{thebibliography}
\end{document}